\theoremstyle{definition}
\newtheorem{definition}{Definition}
\theoremstyle{definition}
\theoremstyle{definition}
\newtheorem{remark}{Remark}
\theoremstyle{plain}
\newtheorem{theorem}{Theorem}
\theoremstyle{plain}
\renewcommand\H{{\mathcal H}}
\begin{document}

 \title[Solutions of PDEs with first-order quotients]{Solutions of second-order PDEs with first-order quotients}
 \author{Eivind Schneider}
 \date{}
\address{
Faculty of Science, University of Hradec Králové, Rokitanskeho 62, Hradec Králové 50003, Czech Republic. \newline
E-mail address: {\tt eivind.schneider@uhk.cz}. }
 \keywords{Nonlinear differential equations, differential invariants, quotient PDE, differential syzygies, Hunter-Saxton equation}

 \begin{abstract}
 We describe a way of solving a partial differential equation using the differential invariants of its point symmetries. By first solving its quotient PDE, which is given by the differential syzygies in the algebra of differential invariants, we obtain new differential constraints which are compatible with the PDE under consideration. Adding these constraints to our system makes it overdetermined, and thus easier to solve. We focus on second-order scalar PDEs whose  quotients are first-order scalar PDEs. This situation occurs only when the Lie algebra of symmetries of the second-order PDE is infinite-dimensional. We apply this idea to several different PDEs, one of which is the Hunter-Saxton equation.
 \end{abstract}
 
  \maketitle
\section{Introduction}
One general approach to finding solutions to a PDE is to add to it additional differential constraints, with the idea that the resulting overdetermined PDE is easier to solve than the original PDE. 
In particular, a sufficiently large number of additional constraints may lead to a PDE whose Cartan distribution is completely integrable. One obstacle for applying this idea in practice comes from the fact that finding compatible differential constraints is a nontrivial task. Most additional differential constraints are incompatible with the original PDE, resulting in an overdetermined PDE with no solutions. 

Another of the general approaches to studying PDEs is to use their symmetries. 
We will consider these two ideas together and show that looking for additional constraints only among differential invariants (of a Lie algebra of point symmetries) makes the problem of finding compatible constraints tractable. One of the essential insights which motivate this approach is that the set of compatibility conditions for such additional constraints is closely related to the quotient of the PDE (the differential syzygies in the algebra of differential invariants). 

The idea is the following: The Lie algebra of symmetries gives an equivalence relation on the space of (local) solutions.  By solving the quotient PDE, we get additional differential constraints that can be added to the system. Roughly speaking, each solution of the quotient PDE corresponds to an equivalence class of solutions to the original PDE, and adding the resulting additional differential constraints to the original PDE then amounts to restricting to a specific equivalence class of solutions. 


Finding the quotient PDE is not trivial in general, but the difficulties lie within the two main fields of algebraic geometry and linear PDEs, which are better understood than the field of nonlinear PDEs. Moreover, for all the PDEs we consider in this paper, finding the quotient is an easy task. Therefore we work under the assumption that the main challenges are to solve the quotient PDE to obtain the additional differential constraints, and to solve the resulting overdetermined PDE.

As was pointed out in \cite{Evolutionary}, we can not, in general, expect the quotient PDE to be easier to solve than the original PDE. 
Therefore we will focus on PDEs with a first-order scalar quotient PDE since these can, at least in principle, be solved by the method of characteristics. We will notice that such quotients appear only when the symmetry Lie algebra is infinite-dimensional. With several examples, some well-known and others specifically constructed, we show how the ideas outlined above can be used to find general solutions to some second-order scalar PDEs on functions of two variables.

Section \ref{Theory} gives an overview of the theory of jet spaces and differential invariants, sufficient for our purpose. For readers new to this subject, we illustrate all concepts using Burgers' equation as a running example. We also provide some Maple code since the computations we are doing here are very well-suited for computer algebra systems. In Section \ref{FirstOrder}, we show that a necessary condition for the quotient to be a first-order scalar PDE only if the Lie algebra of symmetries is infinite-dimensional.

A reader with some knowledge in the theory of differential invariants, or one who is mostly interested in seeing how the ideas work in practice, may wish to jump directly to Section \ref{HS}, where we describe in detail how to find the general solution to the Hunter-Saxton equation. It illustrates the ideas in detail and gives a new perspective on the general solution found by Hunter and Saxton. 

In Section \ref{Main}, we consider other PDEs with infinite-dimensional Lie algebras of symmetries. Details here are sparse, as the purpose is to illustrate the general method with many examples rather than getting lost in the details of each of them. We end the section by looking at two examples again, from a different viewpoint, and find their quotient with respect to a finite-dimensional symmetry Lie algebra. In this case, the quotient is not a scalar PDE, but a system of two, partially uncoupled, first-order PDEs.

\section{Symmetries of PDEs and differential invariants} \label{Theory}
We will focus our attention on differential equations of the form
\begin{equation}
F(t,x,u,u_t,u_x,u_{tt},u_{tx},u_{xx})=0 \label{F}
\end{equation} on a function $u(t,x)$, with a nontrivial Lie algebra $\mathfrak g$ of (infinitesimal point) symmetries. We are going to look for additional differential constraints of the form
\[G(t,x,u,u_t,u_x,u_{tt},u_{tx},u_{xx})=0\]  which are compatible with  (\ref{F}) and $\mathfrak g$-invariant.  They will be built up from differential invariants, and we will find them by solving the quotient PDE of (\ref{F}) with respect to $\mathfrak g$. This section is devoted to explain the necessary concepts from the geometric theory of PDEs and differential invariants. For a more detailed treatment of these topics we recommend \cite{KV,O,KL1,KL2}.

\subsection{The PDE as a manifold} \label{Jets}
Fix a point $a \in \mathbb R^2$, and let $J^k_a(\mathbb R^2)$ denote the set of $k$-degree Taylor polynomials of smooth functions on $\mathbb R^2$ centered at the point $a$. Let $J^k(\mathbb R^2)=\cup_{a \in \mathbb R^2} J^k_a(\mathbb R^2)$, so that $J^k_a(\mathbb R^2)$ is a bundle over $\mathbb R^2$. Denote the projection $J^k_a(\mathbb R^2) \to \mathbb R^2$ by $\pi_k$. For any function $f \in C^\infty_{\text{loc}}(\mathbb R^2)$, we define its $k$-jet $[f]_a^k \in J^k_a(\mathbb R^2)$ at $a \in \mathbb R^2$ to be its $k$-degree Taylor polynomial centered at $a$. 

We will use coordinates $t,x,u,u_t,u_x,...,u_{tx^{k-1}},u_{x^k}$ on $J^k(\mathbb R^2)$. If $a \in \mathbb R^2$ is given by $(t_0,x_0)$, then the coordinates of $\theta=[f]_{a}^k$ are 
\begin{gather*}
t(\theta)=t_0, \quad x(\theta)=x_0, \quad u(\theta)= f(t_0,x_0),\\  u_t(\theta)= \frac{\partial f}{\partial t}(t_0,x_0),\quad  u_x(\theta)= \frac{\partial f}{\partial x}(t_0,x_0),\quad  ...,\\ u_{t x^{k-1}}(\theta) = \frac{\partial^k f}{\partial t \partial x^{k-1}}(t_0,x_0), \quad  u_{x^k}(\theta) = \frac{\partial^k f}{\partial x^k}(t_0,x_0). 
\end{gather*}

By varying $a$, we see that any function $f$ gives rise to a section of $J^k(\mathbb R^2) \to \mathbb R^2$ which we denote by $j^k f$. It is given by $j^k f(t,x)=[f]^k_{(t,x)}$. 

There is additional geometric structure on $J^k(\mathbb R^2)$ responsible for filtering out, from the set of all sections of $\pi_k$, those that are of the form $j^k f$. It is called the Cartan distribution, denoted $\mathcal C^k$. At each point $\theta \in J^k(\mathbb R^2)$, it defines a subspace $\mathcal C^k_\theta \subset T_\theta J^k(\mathbb R^2)$. It is the span of tangent planes of sections of the form $j^k f$ with the property $\theta=j^k f(\pi_k(\theta))$. In $J^1(\mathbb R^2)$ the Cartan distribution is the kernel of the one-form $\omega_0=du-u_t dt-u_x dx$, and in $J^2(\mathbb R^2)$ it is the kernel of the three one-forms $\omega_0, du_t-u_{tt} dt-u_{tx}dx, du_x-u_{tx} dt-u_{xx} dx$. In a similar way we may define the Cartan distribution on $J^{k}(\mathbb R^2)$ as the kernel of $du_{t^i x^{j-i}} - u_{t^{i+1} x^{j-i}} dt-u_{t^{i} x^{j-i+1}} dx$, for $i=0,...,j$ and $j=0,...,k-1$. 

By interpreting (\ref{F}) as an equation on $J^2(\mathbb R^2)$, we obtain a submanifold $\mathcal E_2 \subset J^2(\mathbb R^2)$. The significance of this manifold comes from the following fact: If $f$ is a solution to (\ref{F}) defined on $D \subset \mathbb R^2$, then $j^2 f(D)$ is a two-dimensional submanifold of $\mathcal E_2$. Moreover it is an integral manifold of the restriction of the Cartan distribution to $\mathcal E_2$. 

From this viewpoint we get a natural generalization of the concept of solution to (\ref{F}),  
namely a two-dimensional integral manifold of the Cartan distribution. And we will see in some of the examples below that we may get solutions that are not given globally by a function on $\mathbb R^2$. 

A smooth solution to (\ref{F}) is also a solution to the third-order equations $D_t(F)=0,D_x(F)=0$. We define $\mathcal E_3=\{F=0,D_t(F)=0,D_x(F)=0\} \subset J^3(\mathbb R^2)$,  and similarly, by repeated differentiation, $\mathcal E_k \subset J^k(\mathbb R^2)$. This results in a tower of bundles 
\[\mathbb R^2 \leftarrow J^0(\mathbb R^2)=\mathbb R^2\times\mathbb R \leftarrow J^1(\mathbb R^2) \leftarrow \mathcal E_2 \leftarrow \mathcal E_3 \leftarrow \cdots.\] 
We will also use the notation $\mathcal E_0=J^0(\mathbb R^2)$ and $\mathcal E_1=J^1(\mathbb R^2)$ when convenient.

Since $\dim J^k(\mathbb R^2)= 2+\binom{k+2}{2}$ and $\mathcal E_k$ is given by $\binom{k}{2}$ independent differential constraints, we get 
\[ \dim \mathcal E_k= 3+2k\]
implying that the fibers of $\mathcal E_k \to \mathcal E_{k-1}$ are two-dimensional. Naively, considering formal solutions of $F=0$, we may use this count to estimate the size of the solution space of (\ref{F}). Since $\dim \mathcal E_k=\dim J^k(\mathbb R,\mathbb R^2)$, we expect the solution space to be parametrized by two functions of one variable.

\subsubsection*{Burgers' equation}
We will use Burgers' equation as a running example to illustrate the concepts in this section. Burger's equation is defined by $F=u_{xx}-u_t-uu_x=0$. It defines a seven-dimensional submanifold $\mathcal E_2 \subset J^2(\mathbb R^2)$. Its prolongation $\mathcal E_3$ is defined by \[D_{x}(F)= u_{xxx}-u_{tx}-u_x^2-uu_{xx}=0, \;\; D_t(F)=u_{txx}-u_{tt}-uu_{tx}-u_t u_x=0\] in addition to $F=0$, and is a nine-dimensional submanifold in $J^3(\mathbb R^2)$.

\subsection{Point symmetries}
Let $X$ be a vector field on $J^0(\mathbb R^2)$. In coordinates it takes the form
\[X=a(t,x,u) \partial_t+b(t,x,u) \partial_x+c(t,x,u) \partial_u.\] 
There is a unique vector field $X^{(k)}$ on $J^k(\mathbb R^2)$ that projects to $X$ and preserves the Cartan distribution on $J^k(\mathbb R^2)$. We call it the $k$th prolongation of $X$. The flow of $X^{(k)}$ takes (local) integral manifolds of the Cartan distribution on $J^k(\mathbb R^2)$ to integral manifolds. The formula for $X^{(k)}$ can be found in many introductory treatments of this topic. See for example \cite{KV,KL1}. The computations in this paper are mostly done with the \texttt{DifferentialGeometry} and \texttt{JetCalculus} packages in Maple, where the \texttt{Prolongation} procedure computes the prolongation for us. 

\begin{definition}
A vector field \[X=a(t,x,u) \partial_t+b(t,x,u) \partial_x+c(t,x,u) \partial_u\]
is a (point) symmetry of $F=0$ (or $\mathcal E$) if $X^{(2)}$ is tangent to $\mathcal E_2 \subset J^2(\mathbb R^2)$, i.e. 
\begin{equation}
X^{(2)}(F)|_{\mathcal E_2} = 0. \label{eq:Symmetries}
\end{equation}
\end{definition}
It follows that $X^{(k)}$ is tangent to $\mathcal E_k$ for every $k$. The set of symmetries forms a Lie algebra. Since $X^{(k)}$ preserves the Cartan distribution, its flow acts on the space of (local) integral manifolds of the Cartan distribution on $\mathcal E_k$, and thus on the space of solutions of $F=0$. 

Equation (\ref{eq:Symmetries}) is a polynomial in $u_t,u_x,u_{tt},u_{tx},u_{xx}$, and restricting to $\mathcal E_2$ can be done by using $F=0$ to write one of these coordinates in terms of the others. The vanishing of the remaining polynomial is equivalent to the vanishing of each of its coefficients, which are linear differential equations on $a$, $b$ and $c$. This system of PDEs is often highly overdetermined and not difficult to solve.

\subsubsection*{Symmetries of Burgers' equation}
The Lie algebra of point symmetries of Burgers' equation is spanned by 
\[\partial_t, \quad \partial_x, \quad t \partial_x+\partial_u, \quad t^2 \partial_t+tx \partial_x+(x-tu) \partial_u, \quad 2 t \partial_t+x \partial_x-u\partial_u.\]
We show how the symmetries of Burgers' equation can be found with a few lines of Maple code, since this type of computation is very well-suited for computer algebra systems.
\begin{verbatim}
restart: with(DifferentialGeometry): with(JetCalculus):
DGsetup([t,x],[u], E,2):
F := -u[]*u[2]-u[1]+u[2, 2]:
phi:=Transformation(Prolong(
   DifferentialEquationData([F],[u[2,2]]),3)):
X:=a(t,x,u[])*D_t+b(t,x,u[])*D_x+c(t,x,u[])*D_u[]:
sol:=pdsolve({coeffs(
   expand(Pullback(phi,LieDerivative(Prolong(X,2),F))),
   [u[1],u[2],u[1,1],u[1,2]])}):
eval(X,sol);
\end{verbatim}
This code does mostly symbolic manipulations. However, in the next-to-last line, it uses \texttt{pdsolve} to solve a system of PDEs. In this case the \texttt{pdsolve} procedure is able to find all solutions, but in general, care must be taken when using this procedure and one should not trust it blindly.

While the Maple code is spread throughout Section \ref{Theory}, it should be considered as one unit: Later code may depend on previous code. The main reason for this is that we avoid having to write the three first lines every time.

\subsection{Differential invariants} \label{Invariants}
We continue to consider the arbitrary, but fixed differential equation $F=0$ and its corresponding submanifolds $\mathcal E_k \subset J^k (\mathbb R^2)$. Let $\mathfrak g$ be a Lie algebra of symmetries, possibly a Lie subalgebra of the full symmetry Lie algebra. 

\begin{definition}
A differential invariant of order $k$ is a function on $\mathcal E_k$ that is constant on $\mathfrak g$-orbits.
\end{definition}
This implies that a differential invariant $I \in C_{\text{loc}}^\infty(\mathcal E_k)$ satisfies the PDE  
\begin{equation}
X^{(k)}(I) = 0 \label{eq:Invariant}
\end{equation} 
for every $X \in \mathfrak g$. Even though $X^{(k)}$ may be defined everywhere on $J^k(\mathbb R^2)$, the equation (\ref{eq:Invariant}) is an equation on $\mathcal E_k$ only. In all computations below this is the system of linear PDEs we will solve in order to find a generating set of differential invariants. It is sufficient to check (\ref{eq:Invariant}) on basis elements, and even when $\mathfrak g$ is infinite-dimensional the system will consist of finitely many independent equations (for any fixed order $k$). 

We will exclusively consider invariants whose restrictions to fibers of $\mathcal E_k \to J^0(\mathbb R^2)$ are rational functions, as this is, in most cases of interest, sufficient to separate orbits in general position (see \cite{KL2}).  There are some technical requirements for this, concerning algebraicity of $\mathcal E$ and the symmetry pseudogroup under consideration. We direct the interested reader to \cite{KL2}, as we will not go deep into this topic here.

Let $\mathcal A_k$ denote the field of rational differential invariants of order $k$. We have $\mathcal A_i \subset \mathcal A_{i+1}$ for $i >0$. Let $s_k$ denote the transcendence degree of $\mathcal A_k$. Then $s_k$ is equal to the codimension of a $\mathfrak g$-orbit in $\mathcal E_k$ in general position. Define $\H_k=s_k-s_{k-1}$ and $\H_0=s_0$. The function $\H_k$ of $k$ is called the Hilbert function.
Since $\dim \mathcal E_k=3+2k$ we get $\H_k \leq 2$ for $k \geq 0$. (And $\H_0=3$ only if $\mathfrak g$ is trivial.)

We see that the number of independent differential invariants of order $k$ can, and usually will, increase without bound as $k$ increases. In the next section we introduce invariant derivations, which turn the field of differential invariants into a differential field, which can be generated by a finite number of differential invariants.

\subsubsection*{Differential invariants of Burgers' equation}
We compute second-order differential invariants of Burgers' equation with respect to the three-dimensional symmetry Lie algebra $\mathfrak h = \langle \partial_t, \partial_x, t \partial_x+\partial_u\rangle$ using Maple. 
\begin{verbatim}
sym:=[D_t,D_x,t*D_x+D_u[]]:
pdsolve(Pullback(phi,map(
   LieDerivative,map(Prolong,sym,3),
   f(t,x,u[],u[1],u[2],u[1,1],u[1,2]))));
\end{verbatim}
Notice that this is a situation in where Maple's \texttt{pdsolve} can be safely used. Since we have other ways of knowing how many independent invariants exist, \texttt{pdsolve}'s output can easily be checked. 

There are four second-order invariants:
\[u_x, \!\!\quad u_t+uu_x,\!\! \quad u_{tx}+u(u_t + uu_x),\!\! \quad u_{tt}+4 u_t u_x+2(2 u_x^2+u_{tx})+u^2(u_t+uu_x)\] 
Here we have used the variable $t,x,u,u_t,u_x,u_{tt},u_{tx}$ as coordinates on $\mathcal E_2$. Notice that these invariants can also be given by 
\[I = u_x, \qquad J=u_{xx}, \qquad H=u_{xxx}, \qquad K=u_{xxxx}. \]
The rewriting may be done by using $F=0$ and its derivatives. In Maple the rewriting can be done like this:
\begin{verbatim}
A:=u[2]: B:=u[2,2]: H:=u[2,2,2]: K:=u[2,2,2,2]:
Pullback(phi,[A,B,H,K]);
\end{verbatim}
We named the invariants \texttt{A} and \texttt{B} in Maple, instead of $I$ and $J$, because Maple's \texttt{I} is the imaginary unit. 
Since $\dim \mathcal E_2=7$, and $\mathfrak h$ orbits are three-dimensional (the action is free already on $J^0(\mathbb R^2)$), the transcendence degree of the field of second-order differential invariants is 4. In the next section we will show how to generate the rest of the differential invariants.

If $s$ is a solution to $\mathcal E$, given by a function $f$ on $D \subset \mathbb R^2$, the restriction of a $k$th-order invariant $I$ to $s$ is given by $I_s=I\circ j^k f$. 

\subsection{Invariant derivations}
The only invariant derivations we will encounter in this paper are the so-called Tresse derivatives. They are a commuting pair of invariant derivations that play the roles of partial derivatives with respect to a pair of independent differential invariants.  In order to construct them, it will be useful to have the notion of horizontal differential.  The horizontal differential $\hat d$ on a function $f$ on $J^k(\mathbb R^2)$ (or on $\mathcal E_k$) is given in coordinates by 
\[ \hat df= D^{\mathcal E}_t(f) dt+D^{\mathcal E}_x(f) dx\]
where $D^{\mathcal E}_t$ and $ D^{\mathcal E}_x$ are the restrictions of the total derivatives 
\[D_t= \partial_t+u_t \partial_u+ u_{tt} \partial_{u_t}+u_{tx} \partial_{u_x}+\cdots , D_t= \partial_x+u_x \partial_u+u_{tx} \partial_{u_t}+u_{xx} \partial_{u_x}+\cdots\]
to $\mathcal E$. If $s$ is a solution of $F=0$, then $(\hat df)_s= d(f_s)$.  

Let $I,J$ be two differential invariants of order $k$ with the property 
\begin{equation}
\hat dI \wedge \hat dJ\neq 0. \label{dIdJ}
\end{equation} 
In general, this unequality will hold on a Zariski-open set in $\mathcal E_{k+1}$, and all subsequent computations we do are restricted to this Zariski-open set, even if it is not mentioned explicitly. In particular, when we are going to compute solutions of $F=0$, we can not expect to find solutions whose $(k+1)$-jets lie outside this set. 

Assuming that (\ref{dIdJ}) holds, we call the pair $\hat dI, \hat dJ$ a horizontal coframe (for a solution $s$ in general position, the pair $(\hat dI)_s, (\hat dJ)_s$ will give a coframe on the two-dimensional manifold $s$). Its dual frame consists of derivations, which we denote by $\hat \partial_I, \hat \partial_J$, that are of the form $\alpha D_t+\beta D_x$ and satisfy \[\hat dI(\hat \partial_I)=1, \quad \hat dI(\hat \partial_J)=0, \quad \hat dJ(\hat \partial_I)=0,\quad  \hat dJ(\hat \partial_J)=1.\] Here $\alpha,\beta$ are functions on $\mathcal E_{k+1} \subset J^{k+1}(\mathbb R^2)$. 

The derivations $\hat \partial_I$ and $\hat \partial_J$ commute, and they are invariant with respect to $\mathfrak g$, in the sense that 
\[[X^{(\infty)},\hat \partial_I]=0,\qquad  [X^{(\infty)},\hat \partial_J]=0\] for every $X \in \mathfrak g$. In general, if $H$ is an invariant of order $l\geq k$, then $\hat \partial_I(H)$ and $\hat \partial_J(H)$ will be invariants of order $l+1$, and if we apply the Tresse derivatives to $I$ and $J$ we get \[\hat \partial_I(I)=1, \quad \hat \partial_I(J)=0, \quad \hat \partial_J(I)=0, \quad \hat \partial_J(J)=1.\] This explains the interpretation of the Tresse derivatives as partial derivatives with respect to $I$ and $J$, respectively: When restricted to a solution $s$, they become the partial derivatives with respect to $I_s$ and $J_s$. Notice also that both $\hat \partial_I$ and $\hat \partial_J$ depends on the pair $(I,J)$. Thus, if one of the invariants $I$ or $J$ are changed, both derivations will change.

\begin{theorem}
The algebra of differential invariants is generated by a finite number of differential invariants $I,J,H_1,...,H_q$ together with the invariant derivations $\hat \partial_I, \hat \partial_J$. 
\end{theorem}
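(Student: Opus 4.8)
This is the Lie--Tresse theorem, and the plan is to prove it in three stages: first, show that the Hilbert function $\H_k$ eventually becomes constant; second, produce a pair of invariants whose horizontal differentials are independent, which makes the Tresse derivations $\hat\partial_I,\hat\partial_J$ of the preceding subsection available; third, induct on the jet order to show that every invariant of order above the stabilization level is a rational function of finitely many bounded-order invariants and their iterated Tresse derivatives. Throughout one works on the Zariski-open stratum of $\mathcal E_\infty$ on which orbits have locally constant dimension and all symbols below behave uniformly.

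\emph{Stabilization of $\H_k$.} Since $\dim\mathcal E_k=3+2k$ and $\mathcal A_i\subset\mathcal A_{i+1}$, the codimensions $s_k$ are nondecreasing and $\H_k=s_k-s_{k-1}\le 2$ for all $k$. To see that $\H_k$ is eventually constant one passes to symbols: fixing a generic point on a $\mathfrak g$-orbit, the equations cutting the orbit in $\mathcal E_{k+1}$ out of its preimage over the orbit in $\mathcal E_k$ have well-defined principal symbols, and as $k$ varies these assemble into a finitely generated graded module over the polynomial ring in the two fiber coordinates of $\mathcal E_{k+1}\to\mathcal E_k$. By the Hilbert--Serre theorem its Hilbert function agrees, for large $k$, with a polynomial in $k$ of degree at most one; together with $\H_k\le 2$ this forces the polynomial to be a constant $d\in\{0,1,2\}$, so $\H_k=d$ for all $k>k_0$ and some $k_0$. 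If $d=0$ there are only finitely many independent invariants and the theorem is immediate, so assume $d\ge 1$. This stage --- in effect the rationality of the Poincaré series of the invariant algebra --- is the one that really uses commutative algebra and a careful treatment of the prolonged Lie equations; it is established in \cite{KL1,KL2}, and I would invoke it rather than reprove it.

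\emph{Derivations and induction.} After enlarging $k_0$ we have $s_{k_0}\ge 2$, and in the non-degenerate case of interest we may pick invariants $I,J$ of order $\le k_0$ with $\hat dI\wedge\hat dJ\ne 0$ on a Zariski-open set (were no such pair to exist, all invariants would be functionally dependent along every generic solution and the invariant algebra would be correspondingly degenerate, a case handled separately). Form $\hat\partial_I,\hat\partial_J$, and choose finitely many invariants $H_1,\dots,H_q$ of order $\le k_0$ with $I,J,H_1,\dots,H_q$ generating the field $\mathcal A_{k_0}$ (finitely generated under the standing algebraicity assumptions, being the field of $\mathfrak g$-invariant rational functions on $\mathcal E_{k_0}$). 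The inductive step is the claim that, for every $k\ge k_0$, the field $\mathcal A_{k+1}$ is generated over $\mathcal A_k$ by the invariants $\hat\partial_I(\varphi),\hat\partial_J(\varphi)$ as $\varphi$ runs over a generating set of $\mathcal A_k$. Granting it, induction shows that the differential field generated by $I,J,H_1,\dots,H_q$ under $\hat\partial_I,\hat\partial_J$ contains every $\mathcal A_k$, hence equals $\bigcup_k\mathcal A_k$, which is the assertion. For the claim itself: standard local arguments show that $\mathcal A_{k+1}$ is generated over $\mathcal A_k$ by at most two invariants of order exactly $k+1$ (since $\H_{k+1}=d\le 2$), so it suffices to produce those among the Tresse derivatives of generators of $\mathcal A_k$. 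This is where the order behaviour of $\hat\partial_I,\hat\partial_J$ enters: they raise order by exactly one, and by construction their order-$(k+1)$ symbols are the frame dual to $\hat dI,\hat dJ$; hence the top-order parts of $\hat\partial_I(\varphi),\hat\partial_J(\varphi)$ span the full two-dimensional space of order-$(k+1)$ increments along the fiber of $\mathcal E_{k+1}\to\mathcal E_k$, and since new invariants at level $k+1$ are detected exactly by those top-order parts, they are rational functions of $\mathcal A_k$ together with the $\hat\partial_I(\varphi),\hat\partial_J(\varphi)$.

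The main obstacle is the first stage, the stabilization of $\H_k$: once one knows $\H_k$ is eventually constant and that an admissible pair $I,J$ exists, the construction of the derivations and the inductive generation are comparatively routine, the needed symbol surjectivity being immediate from the definition of the Tresse frame. A secondary source of care is the systematic restriction to the generic stratum where orbit dimensions and symbols are constant, together with the separate --- easier --- treatment of the degenerate situations ($d=0$, or the absence of a pair $I,J$ with $\hat dI\wedge\hat dJ\ne 0$).
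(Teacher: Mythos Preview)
The paper does not prove this theorem: immediately after the statement it writes ``Again we refer to \cite{KL2} for the general theory,'' and offers nothing further. So there is no in-paper argument to compare your proposal against.

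Your sketch is more substantive than what the paper provides and is broadly aligned with the strategy of the global Lie--Tresse theorem in \cite{KL2}: first establish stabilization of the Hilbert function (the genuinely hard, commutative-algebraic step), then choose a pair $I,J$ with $\hat dI\wedge\hat dJ\neq 0$ to build the Tresse frame, and finally induct on order using the fact that the Tresse derivatives produce enough new top-order invariants to generate each $\mathcal A_{k+1}$ over $\mathcal A_k$. You are candid that the first stage is where the real work lies and that you would invoke \cite{KL1,KL2} rather than reprove it; at that point your argument, like the paper's, is ultimately an appeal to the literature, just with the surrounding architecture made explicit.

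Two small points of care in the sketch, neither fatal. First, the assertion that the Tresse derivatives ``raise order by exactly one'' is only guaranteed once you are at or above the order of $I$ and $J$ and on the open set where the relevant symbols do not degenerate; this is consistent with how you use it (only for $k\ge k_0$), but should be stated that way. Second, the symbol-surjectivity step (that the top-order parts of $\hat\partial_I\varphi,\hat\partial_J\varphi$ span the fiber direction) is correct in spirit but in a full proof requires a bit more bookkeeping about which $\varphi$ one differentiates and why the resulting invariants are algebraically independent over $\mathcal A_k$; in \cite{KL2} this is handled via the symbolic system and the algebraicity hypotheses you allude to. With those caveats, the outline is sound.
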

Again we refer to \cite{KL2} for the general theory.

\subsubsection*{Tresse derivatives for Burgers' equation}
In Maple the Tresse derivatives can be computed like this (remember that we defined the invariants \texttt{A} and \texttt{B} in the previous subsection):
\begin{verbatim}
T:=proc(f) a*TotalDiff(f,t)+b*TotalDiff(f,x); end proc:
cf1:=eval([a,b],solve(map(T,[A,B],a,b)-[1,0],{a,b})):
cf2:=eval([a,b],solve(map(T,[A,B],a,b)-[0,1],{a,b})):
T1:=proc(f) cf1[1]*TotalDiff(f,t)+cf1[2]*TotalDiff(f,x); 
   end proc:
T2:=proc(f) cf2[1]*TotalDiff(f,t)+cf2[2]*TotalDiff(f,x); 
   end proc:
simplify([T1(A),T1(B),T2(A),T2(B)]);
\end{verbatim}
They are given by 
\begin{equation*}
\hat \partial_I = \frac{u_{xxx} D_t-u_{txx} D_x}{u_{tx} u_{xxx}-u_{xx} u_{txx}}, \qquad \hat \partial_J = \frac{-u_{xx} D_t+u_{tx} D_x}{u_{tx} u_{xxx}-u_{xx} u_{txx}}.
\end{equation*}
Note that the coefficients are functions on $\mathcal E_2$, but their expressions are simpler when written like this.

\subsection{The Quotient PDE} \label{Quotient}
Assume that the algebra of differential invariants is generated by the invariants $I,J,H_1,...,H_q$ and the Tresse derivatives $\hat \partial_I, \hat \partial_J$. In general this algebra will not be freely generated: there are differential syzygies, i.e. relations between $I,J,H_i, \hat \partial_I H_i, \hat \partial_J H_i$ and higher order derivatives. The differential syzygies define what we call the quotient PDE. Its meaning can be explained as follows.

Let us restrict the invariants to a solution $s$ of $F=0$, and denote the obtained functions by $I_s,J_s,(H_i)_s$, respectively. These can be viewed (locally) as functions on $\mathbb R^2$. Since we have $2+q$ functions on a two-dimensional manifold, there must be at least $q$ independent relations between them. We will consider only such solutions that $I_s$ and $J_s$ are independent (the $(k+1)$-jets of $s$ satisfy (\ref{dIdJ}), where $k$ is the order of $I$ and $J$). Locally we may solve for $(H_i)_s$, so that \[ (H_i)_s=h_i(I_s,J_s)\] for some set of functions $h_i$ of two variables. At first glance it looks like the functions $h_i$ depend on $s$, but in fact, since $I,J,H_i$ are invariant, $h_i$ only depends on the equivalence class of $s$ (where the equivalence relation is determined by the Lie algebra of symmetries). Moreover, $s$ is a solution of both the PDE $F=0$ and the PDE system $H_1=h_1(I,J),..., H_q=h_q(I,J)$. 

The differential syzygies imply that the functions $h_1,...,h_q$ are not completely arbitrary. Instead, they satisfy a system of differential equations. By differentiating $H_i=h_i(I,J)$ with respect to $\hat \partial_I$ and $\hat \partial_J$ we get 
\[\hat \partial_I(H_i)= (h_i)_1(I,J), \qquad \hat \partial_J(H_i)= (h_i)_2(I,J).\]
where $(h_i)_j$ is denotes the partial derivative of $h_i$ with respect to its $j$th argument. The right-hand sides are just functions of $I$ and $J$, while left-hand sides are new differential invariants that are related by the differential syzygies. In this way the differential syzygies can be identified with differential equations on the functions $h_i$ (it may be necessary to differentiate multiple times in order to get all equations on $h_i$, in case there exist syzygies that are not generated by those of first order). 

One reason these ideas are useful for solving PDEs is the following. Let us add differential constraints $H_i=h_i(I,J)$ in such a way that the equations $F=0,H_1=h_1(I,J),...,H_q=h_q(I,J)$ are compatible. One way to check compatibility is to differentiate the expressions and make sure that there are not new equations of equal or lower order appearing.  If we differentiate with respect to the Tresse derivatives, it is clear that the compatibility conditions are exactly the differential syzygies. 
This connection between the quotient PDE and compatible additional differential constraints is one of the main motivations of this paper.

\subsubsection*{Quotient of Burgers' equation} \label{Burgers}
Let's compute the quotient with respect to the three-dimensional Lie algebra $\mathfrak h$ for which we already found differential invariants. The differential syzygies are found by differentiating $H$ and $K$ with respect to $\hat \partial_I, \hat \partial_J$, and then looking for relations among $I,J,H,K, \hat \partial_I H, \hat \partial_J H, \hat \partial_I K, \hat \partial_J K$ (as functions on $\mathcal E_3$). 
\begin{verbatim}
eliminate(Pullback(phi,[T1(H)-h[1],T2(H)-h[2],T1(K)-k[1],
   T2(K)-k[2],K-k[],H-h[],A-a,B-b]),
   [u[1,1,1],u[1,1,2],u[1,1],u[1,2],u[1],u[2]])[2];
\end{verbatim}
The quotient PDE is given by the two equations
\begin{gather*}
(I^2 H-3 I J^2+J K-H^2) \hat \partial_I(H)+J H \hat \partial_I(K)+H^2 \hat \partial_J(K)-K^2 \\+3 I J K-4 I H^2-3 J^2 H=0,\quad  J  \hat \partial_I(H)+H  \hat \partial_J(H)-K =0. 
\end{gather*}
Now, imagine that we were able to solve this system. Any solution to this system can be given implicitly by $G_1(I,J,H,K)=0$ and $G_2(I,J,H,K)=0$, where $G_1$ and $G_2$ are fixed functions of four variables. In this way the solution gives rise to two additional differential constraints on $J^2(\mathbb R^2)$ that can be added to the original equation $F=0$. This gives a five-dimensional submanifold $\{F=0,G_1=0,G_2=0\} \subset\mathcal E_2 \subset  J^2(\mathbb R^2)$. The restriction of the Cartan distribution to this manifold is two-dimensional and completely integrable, and the symmetry group acts transitively on its leaves. The solvability of the symmetry Lie algebra allows for the application of the Lie-Bianchi theorem, meaning that solutions can be found by quadratures (see for example \cite{RedBook}). 

Notice that the second equation shows that $K$ can be generated by the other invariants, so that $I,J,H$ generate the algebra of invariants. Using only these three generators, the quotient PDE is given by 
\begin{gather*}
J^2  \hat \partial_I^2(H)+2 J H  \hat \partial_I \hat \partial_J(H)+H^2  \hat \partial_J^2(H) \\ +I^2  \hat \partial_I(H)+3 I J  \hat \partial_J(H)-4 I H-3 J^2=0.
\end{gather*}
Quotients of evolutionary PDEs are treated in \cite{Evolutionary}.

\begin{remark}
We chose this three-dimensional Lie algebra of symmetries for our computations here instead of the five-dimensional one because we wanted $I,J,H,K$ to be of second order. If ones goal is to get an understanding of the space of equivalence classes of solutions, one should consider the whole Lie algebra of symmetries, or the part of it that one is interested in. Our  purpose here is to find solutions of PDEs, and for this it would be counter-productive to choose the largest possible Lie algebra of symmetries. For the interested reader we have written down the quotient of the full Lie algebra of symmetries of Burgers' equation in the appendix.
\end{remark}

\begin{remark}
Burgers' equation is the quotient of the heat equation $u_t=u_{xx}$ with respect to the one-dimensional symmetry Lie algebra spanned by $u \partial_u$. 
\end{remark}

\subsection{Solving ODEs by symmetry reduction} \label{ODEs}
A special case of the methods used in this paper appears in the case of ODEs where symmetries allows us to reduce the order (see for example \cite{BlumanODE} and \cite{O}). 
Consider the ODE given by 
\begin{equation}
 u_k=F(x,u,u_1,...,u_{k-1})=0 \label{eq:ode}
 \end{equation}
  where $u_i$ denotes the $i$th derivative of $u(x)$. The equation (\ref{eq:ode}) determines a $(k+1)$-dimensional submanifold $\mathcal E_k$ of $J^k(\mathbb R)$. Let $\mathfrak g$ be a Lie algebra of symmetries whose generic orbits on $\mathcal E_k$ are $r$-dimensional. The number of independent differential invariants on $\mathcal E_k$ is $(k+1-r)$, and they can be generated by two invariants $I$ and $H$ and derivatives of $H$ with respect to the Tresse derivative
\[\hat \partial_I= \frac{1}{D_x(I)} D_x.\] 
Since there are only $(k+1-r)$ independent invariants, the invariant $\hat \partial_I^{k-r}(H)$ can be written in terms of the other. Thus we get a differential syzygy, or quotient equation, which now is an ODE of order $(k-r)$: 
\[ G(I,H,\hat \partial_I (H) ,..., \hat \partial_I^{k-r}(H))=0\]
Each solution $g(I,H)=0$ of this can be considered as an $r$th-order ODE that has (\ref{eq:ode}) as a differential consequence ($I$ and $H$ can be chosen to be invariants of order $r$). Thus we may, instead of solving one ODE of order $k$, solve one ODE of order $(k-r)$ and one of order $r$.

\subsubsection*{Example: The quotient and solution of an ODE}
We chose the following example for its complete transparency, and because it shows a subtle detail of quotients of differential equations that can be useful to keep in mind. Consider the ODE defined by 
\[ u_{xxx}=u_{xx}\] 
and the Lie algebra $\mathfrak g = \langle \partial_x, \partial_u \rangle$ of symmetries. The four-dimensional ODE is foliated by two-dimensional orbits. The field of rational differential invariants is generated by $I=u_x$ and $H=u_{xx}$, and these two invariants completely parametrize all orbits. The Tresse derivative is given by $\hat \partial_I= \frac{1}{u_{xx}} D_x$, so that the quotient is $\hat \partial_I(H)=1$. We assume that $u_{xx} \neq 0$ (the case $u_{xx}=0$ can be treated separately).  

The solution of the quotient is $H=I-A$, which gives us a new ODE: $u_{xx}=u_x-A$. Its derivative is the ODE we started with. The solution of  this second-order ODE is $u(x)=Ax+B+C e^x$. Thus we obtained the general solution to $u_{xxx}=u_{xx}$. 

Notice that even though $I$ and $H$ completely separate $\mathfrak g$-orbits on the ODE, the relation $H=I-A$ alone does not completely separate inequivalent solutions. This can be seen by switching the constant $C$ with $-C$ which changes the equivalence class of the solution while preserving the relation $H=I-A$. These solutions are separated by the hypersurface given by $H=u_{xx}=0$. For every $A$, there is one equivalence class with $H<0$, and one with $H>0$.  

\subsection{First-order quotients} \label{FirstOrder}
The method above shows that solving the PDE $F=0$ can be broken down into two steps: solving the quotient PDE, and solving the overdetermined system made by adding the differential constraints corresponding to a solution of the quotient. Intuitively, we restrict to solutions lying  inside one equivalence class (although the previous example shows that it is not that simple). The hope is that each of these two steps will be significantly easier than solving the original PDE. In the case of Burgers' equation, the last step is easy, but the first is not. 

The goal of this section is to pin down a special case in which we are able to solve the quotient PDE. Since first-order scalar PDEs can be solved by using the method of characteristics, at least in principle (see for example \cite{KV}), we seek PDEs with a first-order scalar quotient.

In all of the examples we are going to consider, we will need to solve two first-order PDEs, one of which is the quotient.  We noted in Section \ref{Jets} that the general solution is expected to be parametrized by two functions of one variable. In all the examples below, these appear as the parameters in the general solution of the two first-order PDEs. Thus, in this case the picture is very similar to that of ODEs.

Assume that the pair $(\mathcal E,\mathfrak g)$ has a first-order scalar quotient PDE. 
Reformulating this in terms of the algebra of differential invariants, we require it to be generated by three independent differential invariants $I,J,K$ and the Tresse derivatives $\hat \partial_I, \hat \partial_J$. The quotient is of desired type if and only if there is one syzygy of the form $S(I,J,K,\hat \partial_I (K),\hat \partial_J (K))=0$ and all other differential syzygies are generated by it and its Tresse derivatives. We also require that the syzygy $S=0$ and its differential consequences $\hat \partial_I(S)=0, \hat \partial_J(S)=0,...$ are independent. 


If we differentiate one more time, we get three new invariants $\hat \partial_I^2(K)$, $\hat \partial_I \hat \partial_J(K)$ and $\hat \partial_J^2(K)$, and two new syzygies: 
\begin{align*}
\hat \partial_I(S)&= S_1+S_3 \hat \partial_I(K)+S_4 \hat \partial_I^2(K)+S_5 \hat \partial_I \hat \partial_J(K)=0,
\\ \hat \partial_J(S)&=S_2+S_3 \hat \partial_J(K)+S_4 \hat \partial_I \hat \partial_J(K)+S_5 \hat \partial_J^2(K)=0.
\end{align*} 
Now we have eight differential invariants, and three syzygies between them. Continuing this we see that we obtain at each step exactly one differential invariant that is independent of all the invariants from the previous step. At some point we must obtain an invariant $L$ that has order $l$ higher than $I$ and $J$, so that at least one of $\hat \partial_I (L)$ or $\hat \partial_J(L)$ is of order $l+1$. Then further differentiation will give invariants of strictly increasing order at each step. 

We saw in Section \ref{Invariants} that, for the Hilbert function $\H_k$, we have $\H_k \leq 2$ for $k \geq 0$. When the differential invariants are generated by three invariants and one differential syzygy, and all derivatives of the syzygy are independent, we see that $\H_k=1$ for every $k$ above some integer. This implies that the dimension of a $\mathfrak g$-orbit in $\mathcal E_k$ grows without bound, as $k$ grows. Thus $\mathfrak g$ is infinite-dimensional. 

\begin{theorem}
Let $\{F=0\}\subset J^2(\mathbb R^2)$ be a second-order PDE, and let $\mathfrak g$ be a Lie algebra of symmetries. Its quotient PDE is a first-order PDE on one function of two variables only if $\mathfrak g$ is infinite-dimensional. 
\end{theorem}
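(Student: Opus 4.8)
The plan is to argue by contraposition: assuming that the quotient PDE is a first-order scalar PDE, I will show that the dimension of a generic $\mathfrak g$-orbit in $\mathcal E_k$ grows without bound as $k\to\infty$, which is impossible for a finite-dimensional Lie algebra. The first step is to translate the hypothesis into the language of differential invariants, exactly as in the paragraph preceding the statement: the field of differential invariants is generated by three independent invariants $I,J,K$ together with the Tresse derivatives $\hat\partial_I,\hat\partial_J$, there is a single syzygy $S(I,J,K,\hat\partial_I(K),\hat\partial_J(K))=0$, every other syzygy is generated by $S$ and its Tresse derivatives, and $S=0$ together with $\hat\partial_I(S)=0,\hat\partial_J(S)=0,\dots$ are independent.

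Next I would do the bookkeeping of invariants against syzygies, level by level, where ``level $m$'' means the invariants obtained by applying a total of $m$ Tresse derivatives to $K$. At level $m$ the monomials $\hat\partial_I^{a}\hat\partial_J^{b}(K)$ with $a+b=m$ number $m+1$, while the syzygies $\hat\partial_I^{a}\hat\partial_J^{b}(S)$ with $a+b=m-1$ number $m$ (the naive doubling at each differentiation step is cut down by the commutativity of $\hat\partial_I$ and $\hat\partial_J$), and by the independence hypothesis these syzygies express $m$ of the new monomials in terms of lower ones. Hence the transcendence degree through level $m$ equals $2+\sum_{j=0}^{m}\bigl((j+1)-j\bigr)=m+3$, so exactly one algebraically independent invariant is gained per level, confirming that the full algebra has infinite transcendence degree. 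Since each $\mathcal A_k$ has finite transcendence degree $s_k\le\dim\mathcal E_k=3+2k$, the orders of these invariants cannot remain bounded; therefore some invariant $L$ has order $l$ strictly larger than the orders of $I$ and $J$, and from level $l$ onward applying $\hat\partial_I$ or $\hat\partial_J$ raises the jet order by exactly one. Consequently the Hilbert function satisfies $\H_k=s_k-s_{k-1}=1$ for all sufficiently large $k$.

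The third step converts this into orbit dimensions. By the theory recalled in Section \ref{Invariants}, $s_k$ is the codimension of a generic $\mathfrak g$-orbit in $\mathcal E_k$, and since $\dim\mathcal E_k=3+2k$ the generic orbit in $\mathcal E_k$ has dimension $3+2k-s_k$. Its successive increments are $(3+2(k+1)-s_{k+1})-(3+2k-s_k)=2-\H_{k+1}=1$ for $k$ large, so the dimension of a generic $\mathfrak g$-orbit in $\mathcal E_k$ tends to infinity. On the other hand, if $\mathfrak g$ were finite-dimensional of dimension $d$, then for every $k$ and every point $\theta\in\mathcal E_k$ the orbit through $\theta$ of the prolonged action of $\mathfrak g$ would have dimension at most $d$, since it is swept out by the flows of the $X^{(k)}$ with $X\in\mathfrak g$. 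This contradiction forces $\mathfrak g$ to be infinite-dimensional.

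The main obstacle is the middle step: rigorously establishing that the Hilbert function stabilizes precisely at $1$. One must verify that the count of new invariants against new syzygies at each differentiation level genuinely yields a net gain of exactly one (neither $0$ nor $2$), and carefully handle the passage from ``number of Tresse derivatives applied'' to ``jet order,'' which disagree until the orders of the generating invariants start to increase. The independence assumptions on $S$ and its differential consequences do most of the work, but the combinatorial bookkeeping has to be carried out with care; the remaining steps are soft.
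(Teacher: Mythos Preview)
Your proposal is correct and follows essentially the same approach as the paper's argument in Section~\ref{FirstOrder}: count invariants against syzygies level by level to see that exactly one new independent invariant appears at each step, deduce that $\H_k=1$ eventually, and conclude that generic orbit dimensions in $\mathcal E_k$ are unbounded, contradicting finite-dimensionality of $\mathfrak g$. Your version makes the bookkeeping and the orbit-dimension computation more explicit than the paper does, but the structure of the argument is the same.
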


With this in mind, we look for second-order PDEs whose symmetry Lie algebra has infinite dimension.

\section{The Hunter-Saxton equation} \label{HS}
In this section we will use the Hunter-Saxton equation to show in detail how the ideas outlined above can be used to find exact solutions of PDEs. We will find a formula for the general solution, similar to that found in \cite{HS}, and we will look at a few concrete solutions, also with respect to Cauchy data. 

Here, and in the rest of the paper, we will continually recycle the notation used above. The second-order PDE under consideration will be denoted by $\mathcal E$ and given by $F=0$, the symmetry Lie algebra under consideration is $\mathfrak g$, and $I,J,H$ are generators for the algebra of differential invariants, and so on.

The Hunter-Saxton equation is given by \[F=(u_t+uu_x)_x-u_x^2/2=0.\]
It was derived in \cite{HS} in order to describe nonlinear instability in the director field of a nematic liquid crystal. 

The Lie algebra of symmetries of the Hunter-Saxton equation is spanned by
\begin{align*}
X_1&=\partial_t, \quad X_2= t \partial_t+x \partial_x, \quad X_3=x \partial_x+u\partial_u, \\
 X_4&=t^2 \partial_t+2tx \partial_x+2x \partial_u,\quad  Y_f = f(t) \partial_x+f'(t) \partial_u,
\end{align*}
where $f$ runs through all smooth functions on $\mathbb R$ (\cite{HSsymmetries}). We will consider only the infinite-dimensional Lie subalgebra $\mathfrak g=\langle Y_f \mid f \in C_{\text{loc}}^\infty(\mathbb R)\rangle$.

\subsection{Differential invariants and the quotient PDE}
By choosing an infinite-dimensional Lie algebra of symmetries, we get a first-order quotient PDE. At the same time, by not considering a larger symmetry Lie algebra, we only have to look to $\mathcal E_2$ to find a generating set of invariants.
\begin{theorem}
The algebra of rational differential invariants is generated by  \[I=t, \quad J= u_x, \quad H=u_{xx}\] together with the Tresse-derivatives \[ \hat \partial_I = D_t-\frac{u_{tx}}{u_{xx}} D_x, \quad \hat \partial_J=\frac{1}{u_{xx}} D_x.\] 
\end{theorem}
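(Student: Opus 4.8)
The plan is to describe the $\mathfrak g$-orbits on each $\mathcal E_k$ explicitly, read the rational invariants off from this description, and then re-express the resulting generators using the Tresse derivative $\hat\partial_J$. The first step is to exponentiate the symmetries. Each $Y_g=g(t)\partial_x+g'(t)\partial_u$ has flow $(t,x,u)\mapsto(t,\,x+sg(t),\,u+sg'(t))$ (its coefficients are constant along the flow, since $t$ is preserved), so the pseudogroup generated by $\mathfrak g$ acts on $J^0(\mathbb R^2)$ by $(t,x,u)\mapsto(t,\,x+g(t),\,u+g'(t))$ with $g$ an arbitrary smooth function. Prolonging, the $k$-jet $[u]^k_{(t_0,x_0)}$ is sent to the $k$-jet at $(t_0,x_0+g(t_0))$ of the function $\tilde u(t,x)=u\bigl(t,x-g(t)\bigr)+g'(t)$.

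On $\mathcal E_k$ one uses $F=0$ together with its total derivatives to eliminate every mixed partial, which leaves $t,x,u,u_t,\dots,u_{t^k},u_x,\dots,u_{x^k}$ as coordinates (this matches $\dim\mathcal E_k=3+2k$). In these coordinates a direct computation shows the prolonged action fixes $t$ and $u_x,u_{xx},\dots,u_{x^k}$ — since $\partial_x^j\tilde u(t,x)=\partial_x^j u(t,x-g(t))$, the $x$-derivatives at the shifted base point equal those of $u$ at the original base point — while it acts on the remaining coordinates $x,u,u_t,\dots,u_{t^k}$ through the $k+2$ jet parameters $g(t_0),g'(t_0),\dots,g^{(k+1)}(t_0)$ in a triangular fashion: $g(t_0)$ translates $x$, then $g'(t_0)$ translates $u$, and $g^{(j+1)}(t_0)$ translates $u_{t^j}$ with coefficient $1$ (modulo terms in the already-fixed lower derivatives of $g$ at $t_0$, coming from $\partial_t^j$ of the argument shift). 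Since nothing ever degenerates, this action is transitive on each level set of $(t,u_x,u_{xx},\dots,u_{x^k})$, so these $k+1$ functions separate the generic orbits in $\mathcal E_k$; hence $\mathcal A_k=\mathbb R(t,u_x,u_{xx},\dots,u_{x^k})$. (In particular $s_k=k+1$ and $\H_k=1$ for $k\ge1$, consistent with $\dim\mathcal E_k=3+2k$ and the discussion in Section~\ref{FirstOrder}.)

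It remains to convert these generators. From $\hat dI=\hat d\,t=dt$ and $\hat dJ=\hat d\,u_x=u_{tx}\,dt+u_{xx}\,dx$ one gets $\hat dI\wedge\hat dJ=u_{xx}\,dt\wedge dx\neq 0$ on the Zariski-open set $\{u_{xx}\neq 0\}$, and solving $\hat dI(\hat\partial_I)=1=\hat dJ(\hat\partial_J)$, $\hat dI(\hat\partial_J)=0=\hat dJ(\hat\partial_I)$ with $\hat\partial_I,\hat\partial_J\in\langle D_t,D_x\rangle$ yields exactly the stated $\hat\partial_I=D_t-\tfrac{u_{tx}}{u_{xx}}D_x$ and $\hat\partial_J=\tfrac1{u_{xx}}D_x$. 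Because $\hat\partial_J=\tfrac1{u_{xx}}D_x$, the invariant $\hat\partial_J^{\,m}(H)$ has leading term $u_{x^{m+2}}/u_{xx}^m$; therefore $\{H,\hat\partial_J H,\dots,\hat\partial_J^{\,k-2}H\}$ and $\{u_{xx},u_{xxx},\dots,u_{x^k}\}$ generate the same subfield over $\mathbb R(I,J)$. Combined with the orbit description this gives $\mathcal A_k=\mathbb R\bigl(I,J,H,\hat\partial_J H,\dots,\hat\partial_J^{\,k-2}H\bigr)$ for every $k$, which is precisely the claim ($\hat\partial_I$ then being redundant for generation, but certainly allowed).

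The main obstacle is the bookkeeping in the orbit computation: one must check that eliminating the mixed partials on $\mathcal E_k$ really does leave $t,u_{t^j},u_{x^j}$ as a coordinate system, and then verify the triangular structure of the prolonged action — in particular that the coefficient of the top jet $g^{(j+1)}(t_0)$ in the transformation law of $u_{t^j}$ is identically $1$, so transitivity on the level sets never fails. A more off-the-shelf alternative would be to invoke the Lie--Tresse theorem above for finite generation, compute $\mathcal A_2=\mathbb R(I,J,H)$ by solving $Y_f^{(2)}(\Phi)=0$ for all $f$ (which splits into four triangular linear equations once one notes that $f,f',f'',f'''$ may be prescribed independently at a point), verify $\H_k=1$ for all $k\ge1$ using the symmetries $Y_f$ with $f$ vanishing to high order at a generic value of $t$, and observe that $\hat\partial_J^{\,k-2}(H)$ supplies a genuinely new order-$k$ invariant at each step; but the direct orbit description above is cleaner and self-contained.
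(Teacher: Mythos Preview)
Your proof is correct and takes a genuinely different route from the paper's. The paper argues infinitesimally: it notes that iterated Tresse derivatives of $H$ produce a new invariant at each order (so $\H_k\ge1$), and then bounds orbit dimension from below by exhibiting, at points of the special slice $\{t=0,\,u_x=0\}\subset\mathcal E_k$, the $k+2$ independent tangent vectors $Z_{-1}=\partial_x$ and $Z_i=Y_{t^{i+1}}^{(i)}|_{t=0,u_x=0}=(i+1)!\,\partial_{u_{t^i}}$; semicontinuity then forces $\H_k=1$. You instead integrate the flows to obtain the full pseudogroup action $(t,x,u)\mapsto(t,x+g(t),u+g'(t))$, pass to the coordinates $t,x,u,u_{t},\dots,u_{t^k},u_x,\dots,u_{x^k}$ on $\mathcal E_k$, and observe that the prolonged action is unit-triangular in the jet parameters $g(t_0),\dots,g^{(k+1)}(t_0)$ on the block $(x,u,u_t,\dots,u_{t^k})$ while fixing the rest, which identifies $\mathcal A_k$ explicitly as $\mathbb R(t,u_x,\dots,u_{x^k})$.

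Your approach buys an explicit description of every $\mathcal A_k$ (not just its transcendence degree) and avoids appealing to a special slice and semicontinuity; it also makes the redundancy of $\hat\partial_I$ for generation transparent. The paper's approach is shorter, stays entirely at the Lie-algebra level, and does not require checking that the mixed partials can be globally eliminated on $\mathcal E_k$ (which in your argument is the one place where the specific form of $F$ enters). Both are clean; yours is the more informative of the two.
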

\begin{proof}
It is easy to verify that $I,J,H$ are invariant. What remains to show is that they generate the whole algebra. 

Differentiation of $H$ with respect to Tresse derivatives clearly results in one new independent differential invariant of each order. This shows that $\H_k\geq 1$, which gives an upper bound on dimension of $\mathfrak g$-orbits in $\mathcal E_k$. We show that this upper bound is obtained.

Let $Z_i$ denote the restriction of $Y_{t^{i+1}}^{(i)}$ to the subset in $\mathcal E_i$ given by $t=0,u_x=0$, and $Z_{-1}=\partial_x$. Then  $Z_i=(i+1)! \partial_{u_{t^{i}}}$. At every point under consideration in $\mathcal E_k$, the vectors  $\{Z_i \mid i=-1,...,k\}$ are independent on points in general position in $\mathcal E_k$. It follows that orbits in general position in $\mathcal E_k$ have dimension greater than or equal to $k+2$, confirming that $\H_k=1$ for every $k$. 
\end{proof}
The quotient PDE is given by 
\[2  \hat \partial_I(H)-J^2 \hat \partial_J(H)+4 J H =0.\] 

\begin{remark} The differential invariants $I$ and $J$ are independent for generic solutions. Thus each solution gives a  relation of the form $G=H-h(I,J)=0$.  The condition that $F=0$ and $G=0$ are compatible, puts a restriction on $h$ which is found by  differentiating the system $\{F=0,G=0\}$, and then eliminating $u_{ttx},u_{txx},u_{xxx}$. The result is a linear first-order PDE on the function $h$: 
\[2  h_I-J^2  h_J+4 J h =0.\] 
Equivalent solutions of the Hunter-Saxton equation will result in the same function $h$. 
\end{remark}

Its general solution is easily found to be given implicitly by 
\[16 g\left(\tfrac{2J}{2-IJ}\right) H-(2-IJ)^4=0.\] 
We choose to write it down like this for future convenience. 
By inserting the expressions for $I,J,H$, we get 
\begin{equation}
G=16 g\left(\tfrac{2u_x}{2-t u_x }\right) u_{xx}-(2-tu_x)^4 =0. \label{G}
\end{equation}

\subsection{The general solution} 
For every choice of $g$, the functions $F=0$ and $G=0$ are compatible. The equation $G=0$ can be viewed as a first-order PDE on $u_x$ which we can solve (as a first-order separable ODE). Its solution is given implicitly by
\[ \hat G =  \left(\int (tw+2)^2 g(w) dw\right)\Bigg|_{w=\tfrac{2u_x}{2-t u_x}}+4C(t)-4x =0.\]
The choice of anti-derivative of $(tw+2)^2g(w)$ is obviously not essential here, since $C(t)$ is arbitrary. However, it will be convenient to fix this, so let us use the convention $\int a(w) dw= \int_0^w a(v) dv$. 
The Lie algebra $\mathfrak g$ acts transitively on the set of integration ``constants'' $C(t)$. 

The four equations $F=0, G=0, \hat G=0, D_t \hat G=0$ defines a two-dimensional surface in the space $\mathbb R^6(t,x,u,u_x,u_{tx},u_{xx})$. Its projection to $\mathbb R^3(t,x,u)$ is a solution to the Hunter-Saxton equation. Notice that due to the symmetries, $u_t$ and $u_{tt}$ do not appear in any of these equations. 

In this case we are even able to give the two-dimensional manifold as a parametrized surface in $\mathbb R^3$. We may solve $D_t \hat G=0, G=0$ for $u_{tx},u_{xx}$ and eliminate second-order derivatives from $F=0$ to obtain an equation $\hat F=0$ depending only on $t,x,u,u_x$. 
Since $x$ appears only in $\hat G=0$ and $u$ appears only in $\hat F=0$, both in a linear way, may solve $\hat G=0, \hat F=0$ for $x$ and $u$ to obtain the solution in $\mathbb R^3(t,x,u)$, parametrized by $I=t,J=u_x$, or, in order to simplify the expression, by $t$ and $w=\frac{2u_x}{2-tu_x}$: 
\begin{equation}
\begin{cases}
t &= t \\
x &= \frac{1}{4} \int (tw+2)^2 g(w) dw+ C(t)\\
u&=\frac{1}{2} \int (tw+2)wg(w) dw+C'(t)
\end{cases} \label{eq:HSsol}
\end{equation}
This two-dimensional manifold will in general be multivalued and have points where it's not differentiable, even when $g$ is smooth.  Notice also that the general solution depends on two arbitrary functions of one variable, $g$ and $C$. 

Let $s$ be any of these parametrized solutions, and consider its intersection $s_{t_0}$ with the plane in $\mathbb R^3$ given by $t=t_0$. Since $u_x$ is a parameter, the function $u_x$ restricted to the curve $s_{t_0}$ is injective. In other words: at any point in time, the slope $u_x(t_0,x)$ will be different for every $x$ where it is defined.

  
\subsection{Comparing to the solution of Hunter and Saxton}
Hunter and Saxton solved their equation in \cite{HS}:
\begin{theorem}[Hunter-Saxton]
Every smooth solution of the Hunter-Saxton equation with the Cauchy data $u(0,x)=\alpha(x)$ is given implicitly by 
\begin{align*}
u&=\alpha(\xi)+t \beta(\xi)+\gamma'(t) \\
x&=\xi + t \alpha(\xi)+\frac{1}{2} t^2 \beta(\xi)+\gamma(t)
\end{align*}
where $\gamma$ is any function with $\gamma(0)=\gamma'(0)=0$ and $\beta$ satisfies $\beta'(\xi)=\frac{1}{2} \alpha'(\xi)^2$. 
\end{theorem}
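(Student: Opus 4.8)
The plan is to recover the Hunter--Saxton formula from the general solution (\ref{eq:HSsol}) derived above by replacing the parameter $w$ with a Lagrangian variable $\xi$. Fix a smooth solution $s$ and set $\alpha(x):=u(0,x)$. On the open set where $u_{xx}\neq 0$ we have $\hat dI\wedge\hat dJ=u_{xx}\,dt\wedge dx\neq 0$, so the derivation of (\ref{eq:HSsol}) applies and there $s$ is given by (\ref{eq:HSsol}) for some functions $g$ and $C$ (determined by $s$). Let $\xi$ be the value of $x$ at time $0$ along a curve $w=\text{const}$: setting $t=0$ in the equation for $x$ in (\ref{eq:HSsol}) gives $x=\int_0^w g(v)\,dv+C(0)$, so define $\xi$ to equal this, whence $g(w)\,dw=d\xi$. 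Setting $t=0$ in the equation for $u$ and differentiating in $w$ then gives $\alpha'(\xi)=w$. The identity $w=\alpha'(\xi)$ is the crux of the change of variables; geometrically it says the quantity $w=\tfrac{2u_x}{2-tu_x}$ is constant along the curves carried by the parametrisation and reduces to $\alpha'$ at $t=0$.

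Substituting $w=\alpha'(\xi)$ and $g(w)\,dw=d\xi$ into (\ref{eq:HSsol}) and expanding $(tw+2)^2=t^2w^2+4tw+4$ and $(tw+2)w=tw^2+2w$, each integral becomes a combination of $\int \alpha'(\xi)^2\,d\xi$, $\int \alpha'(\xi)\,d\xi$ and $\int d\xi$. Introducing $\beta$ with $\beta'(\xi)=\tfrac12\alpha'(\xi)^2$, so $\int \alpha'(\xi)^2\,d\xi=2\beta$, and using $\int \alpha'(\xi)\,d\xi=\alpha$ and $\int d\xi=\xi$, the equation for $x$ becomes $x=\xi+t\alpha(\xi)+\tfrac12 t^2\beta(\xi)+\gamma(t)$ and the equation for $u$ becomes $u=\alpha(\xi)+t\beta(\xi)+\gamma'(t)$, where $\gamma(t)$ absorbs $C(t)$ together with the ($t$-dependent) constants of integration; a short check shows the leftover constant in the $x$-equation has derivative equal to the leftover constant in the $u$-equation, so one and the same $\gamma$ works. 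Since $C$ is arbitrary, normalise it so that $\gamma(0)=\gamma'(0)=0$; then at $t=0$ the formula returns $x=\xi$, $u=\alpha(\xi)$, so the function $\alpha$ appearing in the formula is exactly the prescribed Cauchy data. Conversely, any $(\alpha,\beta,\gamma)$ with $\beta'(\xi)=\tfrac12\alpha'(\xi)^2$ and $\gamma(0)=\gamma'(0)=0$ arises this way --- run the substitution backwards, taking $g(\alpha'(\xi))=1/\alpha''(\xi)$ on the range of $\alpha'$ --- and that these are all solutions follows either by verifying directly that the parametrised surface is a two-dimensional integral manifold of the Cartan distribution on $\mathcal E_2$, or from the count in Section \ref{Jets}, where the solution space was seen to be parametrised by two functions of one variable (here $\alpha$ and $\gamma$).

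The step requiring the most care is ensuring that the formula, derived on the Zariski-open set $\{u_{xx}\neq 0\}$, accounts for \emph{every} smooth solution. Where $u_{xx}$ vanishes only on a closed set with empty interior, the formula extends by continuity. Where $u_{xx}\equiv 0$ on an open set, $u$ is affine in $x$ there, $u(t,x)=p(t)x+q(t)$, and substitution into $F=0$ gives $p'+\tfrac12 p^2=0$ with $q$ arbitrary; one checks that these pieces are recovered from the Hunter--Saxton form by taking $\alpha$ affine, for which $\xi\mapsto x$ is still a local diffeomorphism and the formula remains valid even though (\ref{eq:HSsol}) degenerates. Reconciling and gluing these cases, together with the bookkeeping of integration constants indicated above, is where the genuine work lies; everything else is routine.
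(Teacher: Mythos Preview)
Your approach matches the paper's. The paper does not give an independent proof of this theorem --- it is quoted from \cite{HS} --- and offers only the brief correspondence with (\ref{eq:HSsol}) via the substitution $\xi=\int g(w)\,dw$, $\alpha(\xi)=\int w\,g(w)\,dw$, $\beta(\xi)=\tfrac12\int w^2 g(w)\,dw$, $\gamma=C$, from which $w=\alpha'(\xi)$ follows. Your proposal carries out exactly this change of variables and then supplies what the paper omits: the bookkeeping showing the leftover $t$-dependent constants in the $x$- and $u$-equations are a single $\gamma$ and its derivative, the normalisation $\gamma(0)=\gamma'(0)=0$, and the treatment of the degenerate locus $u_{xx}=0$ needed for the ``every smooth solution'' claim.
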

We see the relation between our formula for the general solution and this one by writing $\gamma(t)=C(t)$ and 
\begin{align*}
\xi= \int g(w) dw, \quad \alpha(\xi)= \int g(w) w dw, \quad \beta(\xi)=\frac{1}{2} \int g(w) w^2 dw.
\end{align*}
It follows that $w=\alpha'(\xi)$.

\subsection{Two examples}
Let us look at a few examples for different functions $g$, and $C$. When $g(w)=e^w$ and $C(t) \equiv 0$ the solution is given by
\begin{align*}
x &= \frac{(t(t u_x-2)+2)^2+4}{2(2-tu_x)^2} \exp \left({\frac{2u_x}{2-tu_x}}\right), \\
u &= \frac{t(2-u_x t)^2+t^2 u_x^2+4 u_x-4}{(2-tu_x)^2} \exp \left({\frac{2u_x}{2-tu_x}}\right).
\end{align*}
We notice that the solution is defined for positive $x$ only and it is not smooth everywhere. See Figure \ref{gexp}.

\begin{figure}[h]
\centering
\fbox{\includegraphics[width=3.8cm]{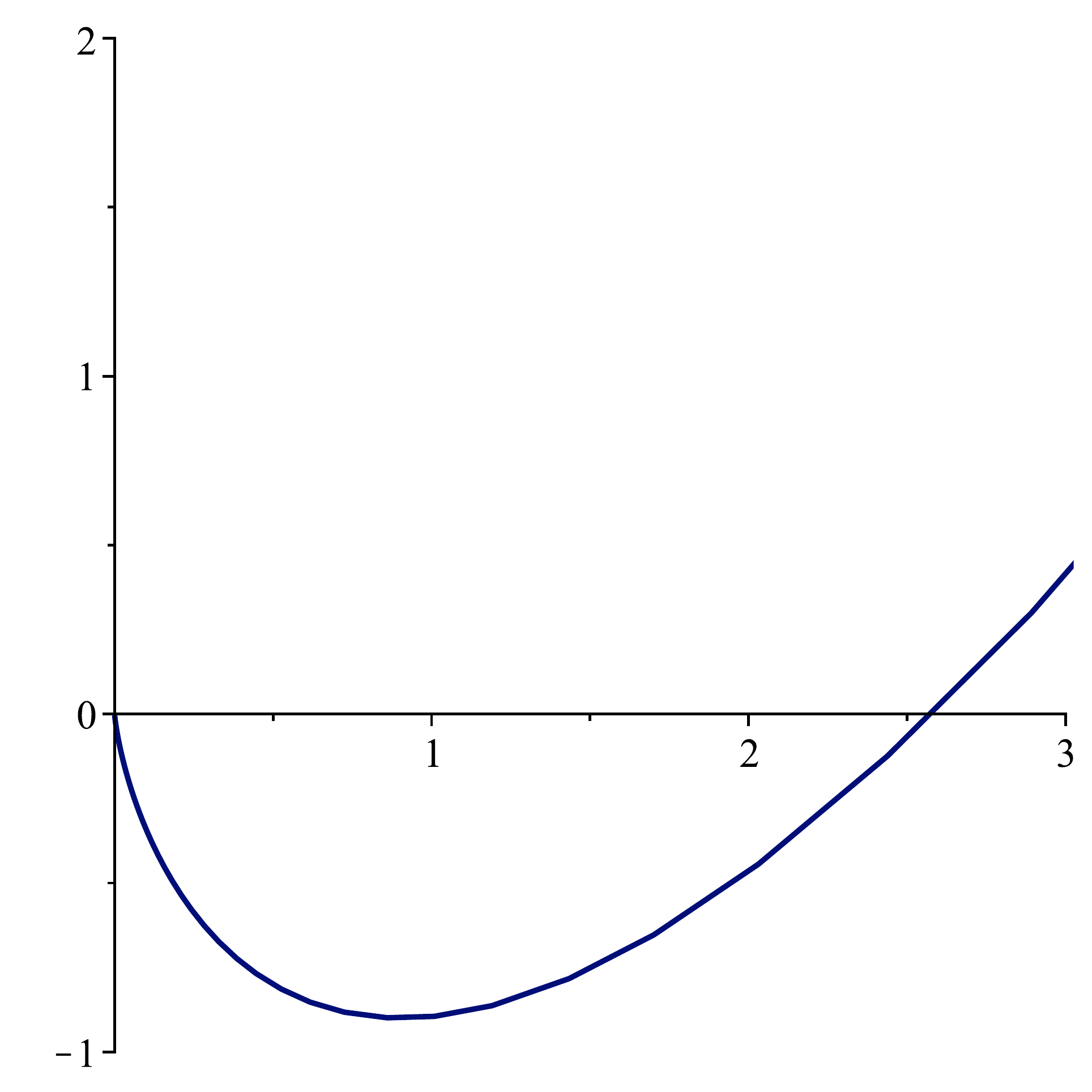}}
\fbox{\includegraphics[width=3.8cm]{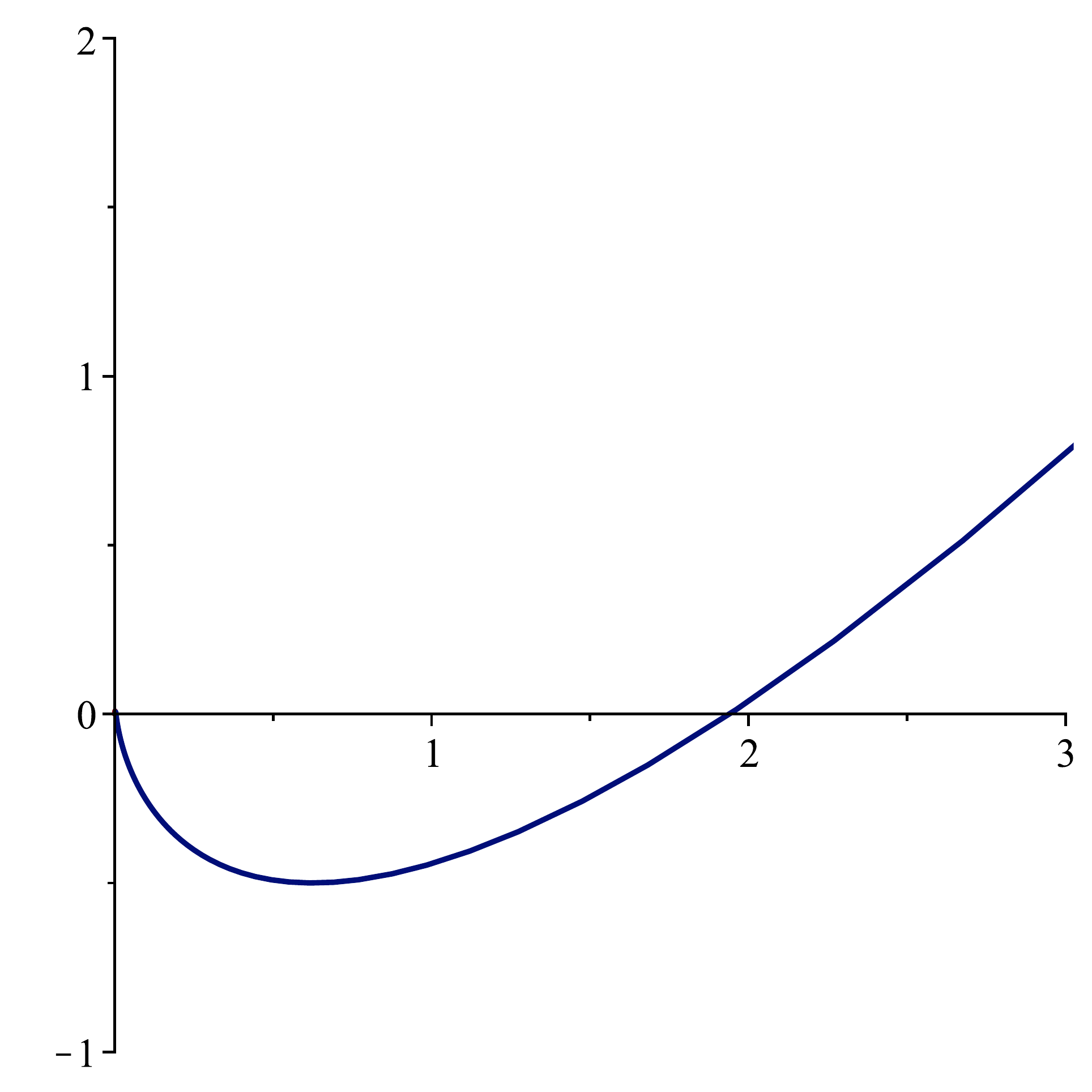}}	
\fbox{\includegraphics[width=3.8cm]{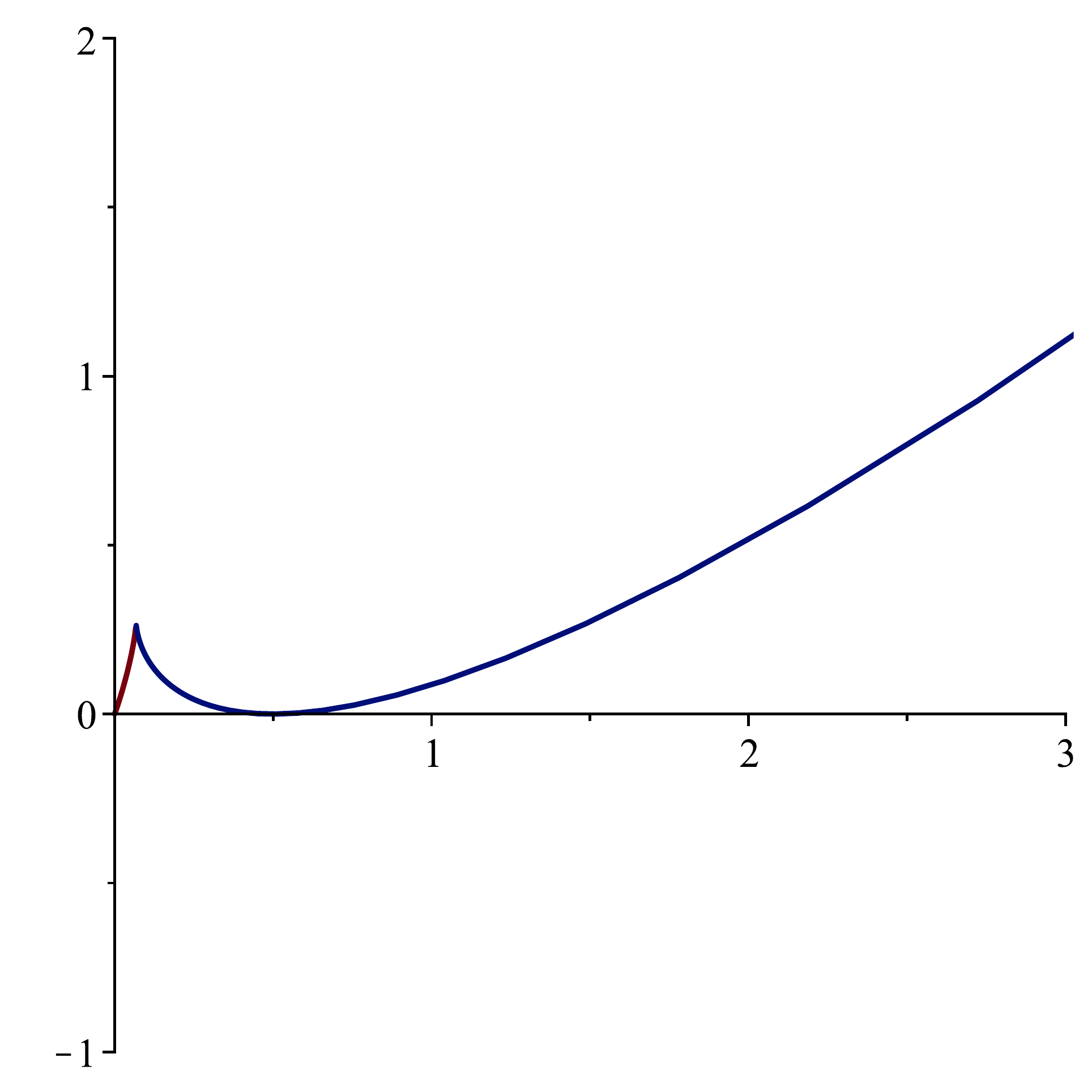}}\\
\fbox{\includegraphics[width=3.8cm]{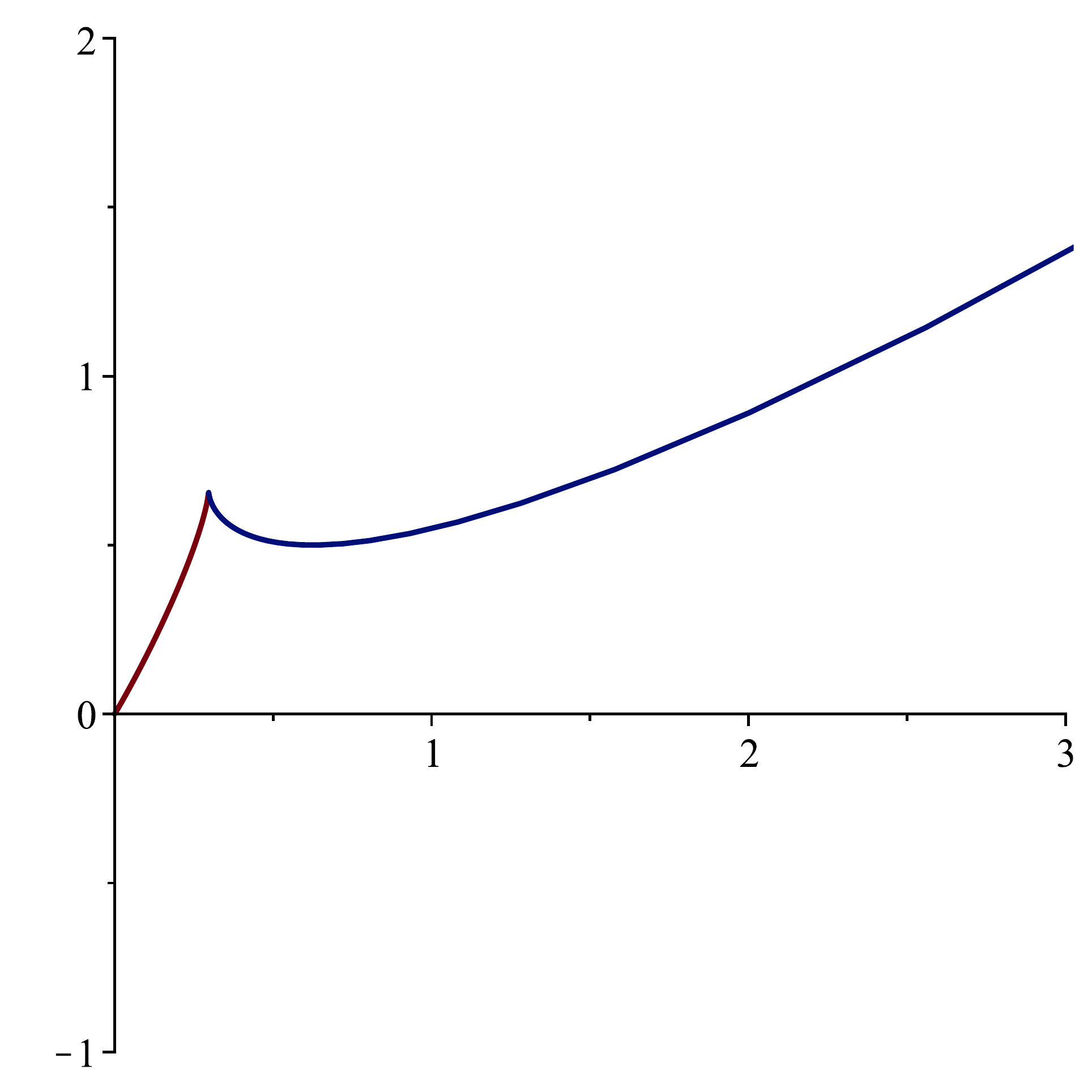}}
\fbox{\includegraphics[width=3.8cm]{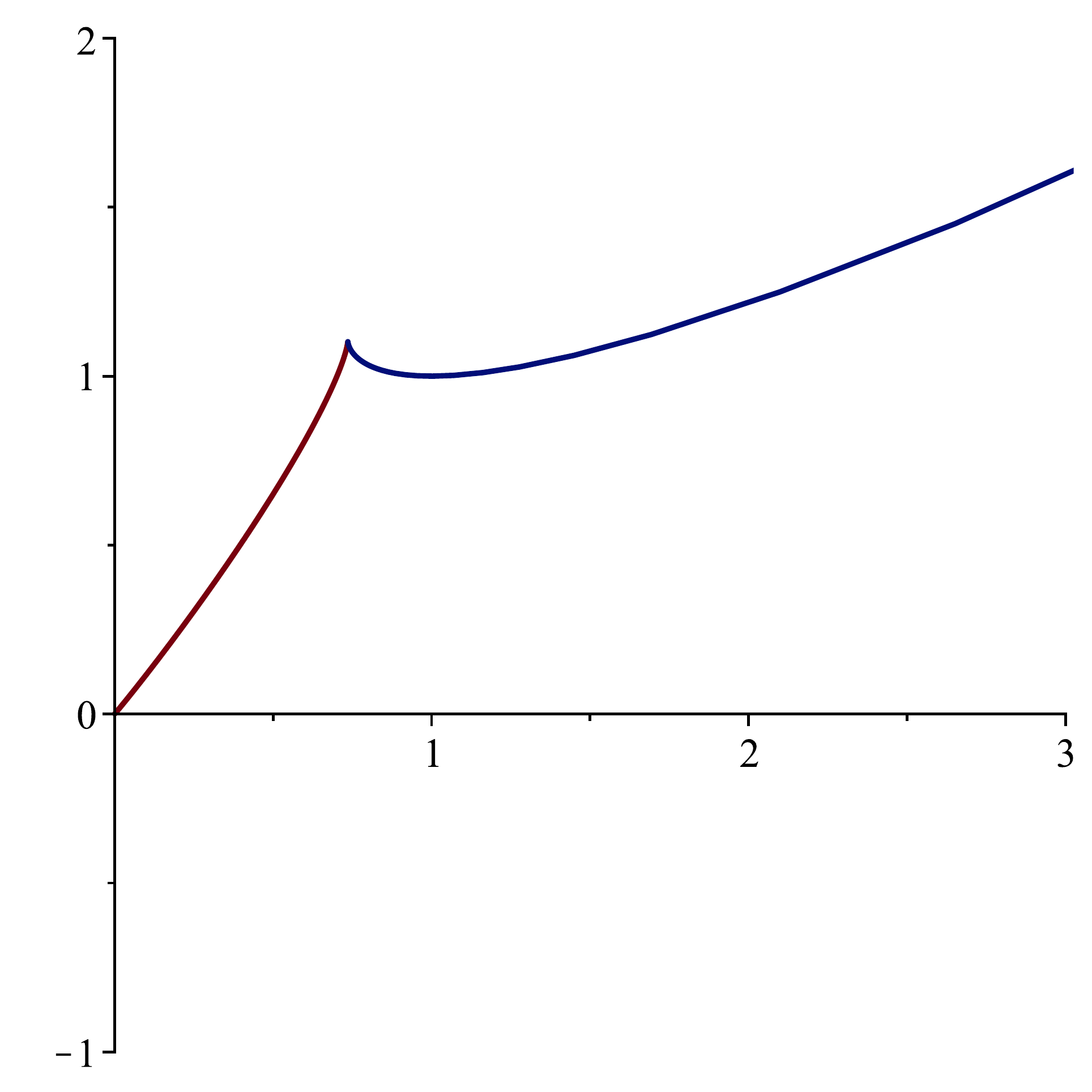}}
\fbox{\includegraphics[width=3.8cm]{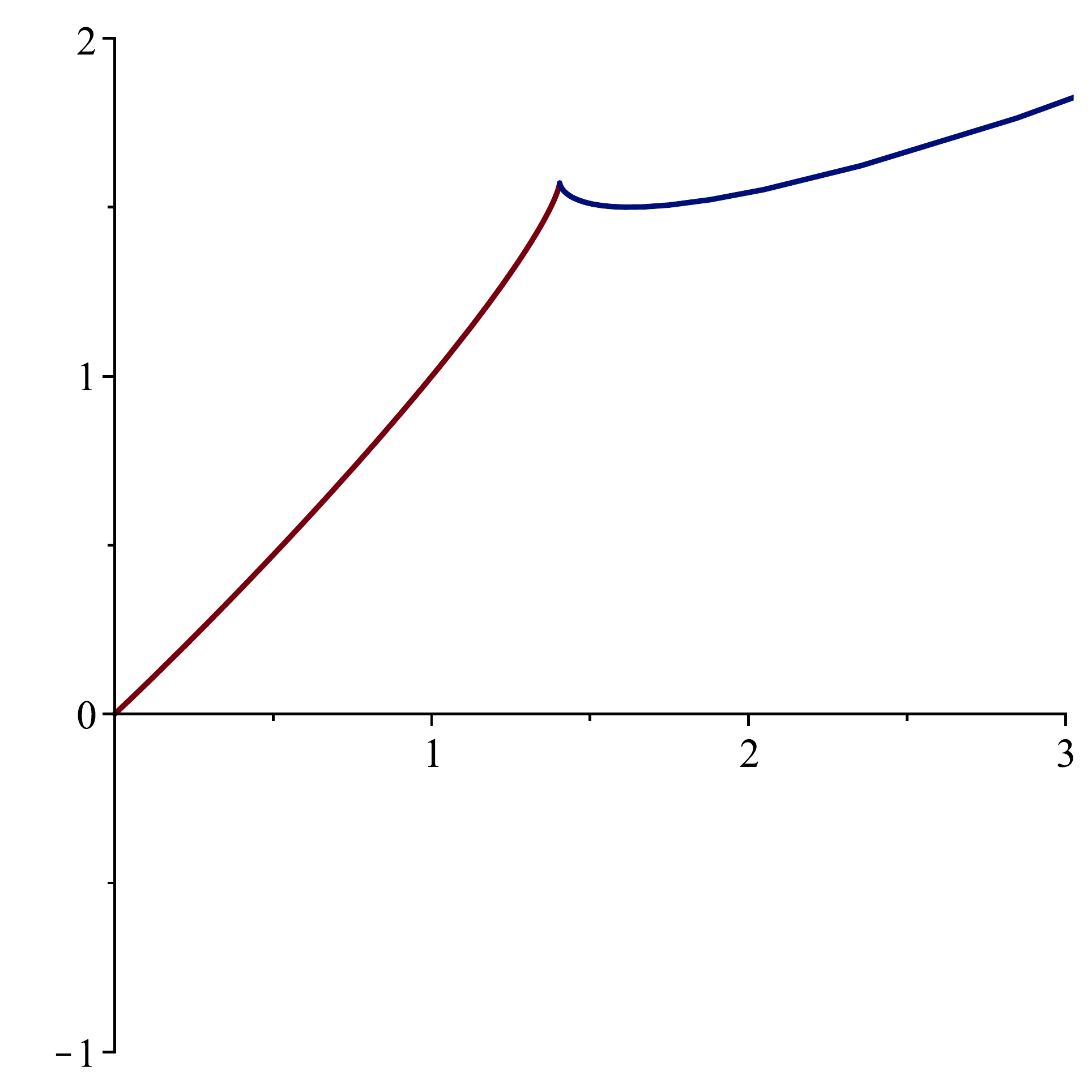}}
\caption{The solution of the Hunter-Saxton equation corresponding to $g(w)=e^w$ for $t=0,1/2,1,3/2,2,5/2$. The blue curve is parametrized by $u_x \in (-\infty,2/t)$ while the red one is parametrized by $u_x \in (2/t,\infty)$. }
\label{gexp}
\end{figure}

Consider now the solution given by $g(w)=w(1+w)(1-w)$ and $C(t)\equiv 0$. It is given by 
{\small \begin{align*}
x &=  \frac{u_x^2 (5t^4 u_x^4-60 t^3 u_x^3-4t^2 u_x^4+300 t^2 u_x^2+48 t u_x^3-640 t u_x-240 u_x^2+480) }{15(t u_x-2)^6}, \\
u &= -\frac{2 u_x^3 (5 t^3 u_x^3-60 t^2 u_x^2- 8 t u_x^3+180 tu_x+96u_x^2-160)}{15(t u_x-2)^6}.
\end{align*}}
This solution is multivalued, has nonsmooth points and, for fixed $t$, it is not defined for every $x$. See Figure \ref{star}. It is possible to eliminate $u_x$ from the two equations above to obtain a 16-degree algebraic equation in $t,x,u$ (the highest power of $u$ is 6). 
\begin{figure}[h]
\centering
\includegraphics[width=0.8\textwidth]{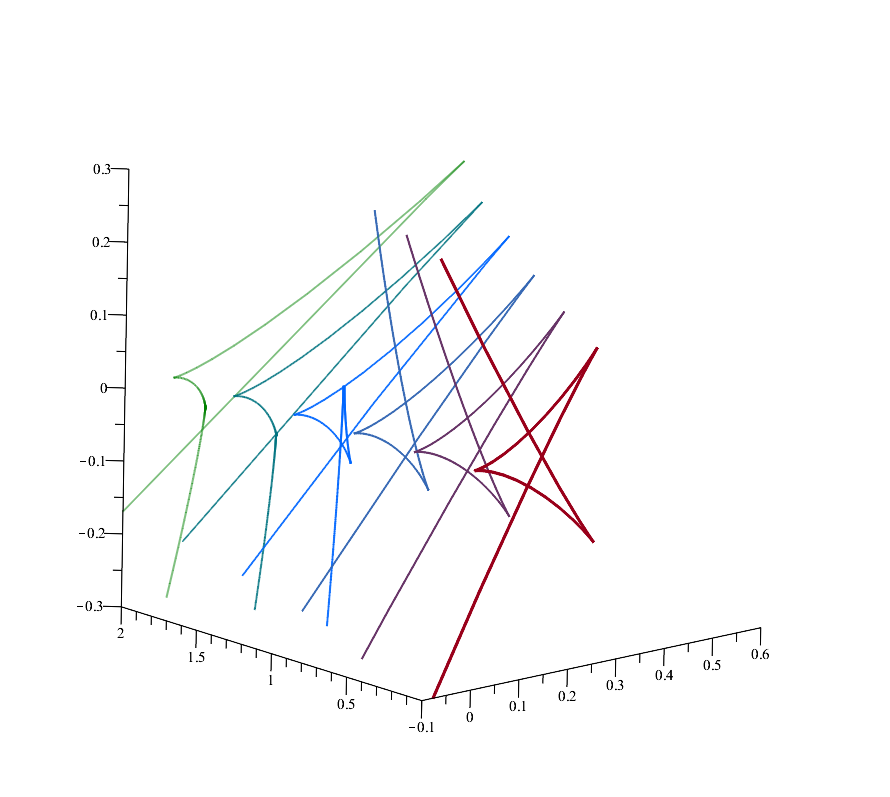}
\caption{Slices of the solution of the Hunter-Saxton equation corresponding to $g(w)=w(1+w)(1-w)$ at $t=0.0,0.4,0.8,1.2,1.6,2.0$, from red to green.  }
\label{star}
\end{figure}

\subsection{Cauchy data}
Consider the Cauchy data $u(t_0,x)=u_0(x)$. If  there exists a function $g$ such that (\ref{eq:HSsol}) satisfies the Cauchy data, it follows from  (\ref{G}) that $g$ satisfies the functional equation
\begin{equation}
 g\left(\frac{2 u_0'(x)}{2-t_0 u_0'(x)}\right) =\frac{(2-t_0 u_0'(x))^4}{16 u_0''(x)}. \label{eq:Cauchy}
\end{equation}
Notice that one necessary condition, for this equation to have a solution, is $u_0''(x)\neq 0$. 

A solution $u=\varphi(t,x)$ of the Hunter-Saxton equation, is transformed by the flow of $Y_f$ to 
\[u=\varphi(t,x+f(t))-f'(t).\] Thus, for a $g$ satisfies (\ref{Cauchy}), we must choose an appropriate function $C(t)$ to make the solution fit with the Cauchy data. Moreover, there are many solutions that satisfy the same Cauchy data.  In particular, if $f(t_0)=0$ and $f'(t_0)=0$, the transformation above will not affect the Cauchy data. Thus in order to specify the Cauchy problem completely, additional restrictions must be given, determining the function $C$ completely. Hunter and Saxton (\cite{HS}) determined $C$ by imposing the boundary condition $\lim_{x \to \infty} u(x,t) \to 0$.

Let us find a solution to the Cauchy data $u(1,x)=e^{-x}$. Solving (\ref{eq:Cauchy}) with this condition gives \[ g(w)=-\frac{8}{w(w+2)^3}.\] This gives the solution 
\begin{align*}
x &= -\frac{2(t-1)^2}{(w+2)^2}+\frac{2(t^2-1)}{w+2}-ln(-w)+ln(w+2) + C(t), \\
u &= \frac{4(1-t)}{(w+2)^2}+\frac{4t}{w+2}+C'(t).
\end{align*}
For $t=1$ we have $u(x,t) \to 0$ as $x \to \infty$, and we impose this condition for every $t$ in order to determine the constant $C(t)= -t^2/2-t+2-\ln 2$. There is a nonsmooth point on the solution, moving along the curve $2u=e^{2-x}$. Figure \ref{Cauchyexp} shows a plot of the solution at three different times. 
\begin{figure}[h]
\centering
\fbox{\includegraphics[width=3.8cm]{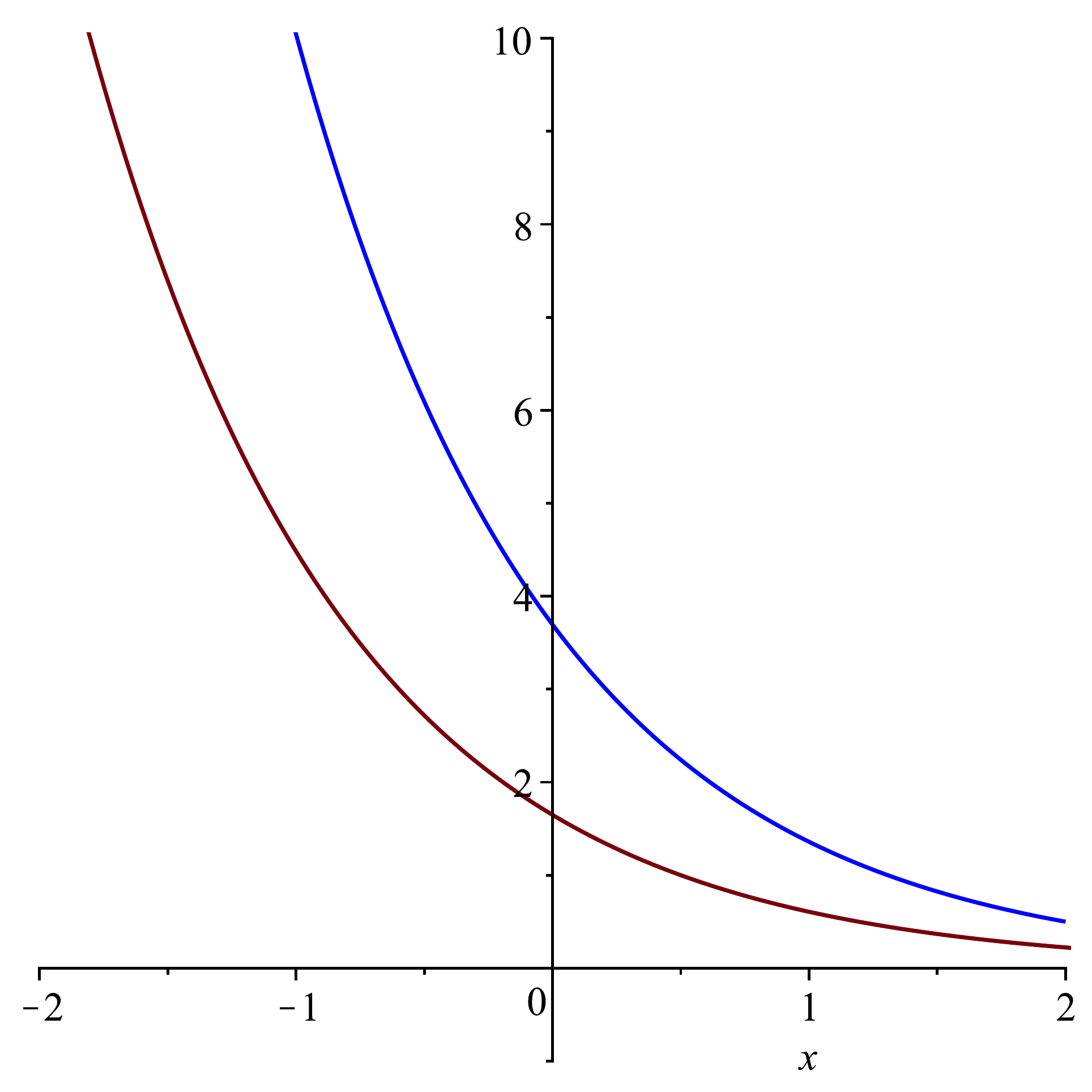}}
\fbox{\includegraphics[width=3.8cm]{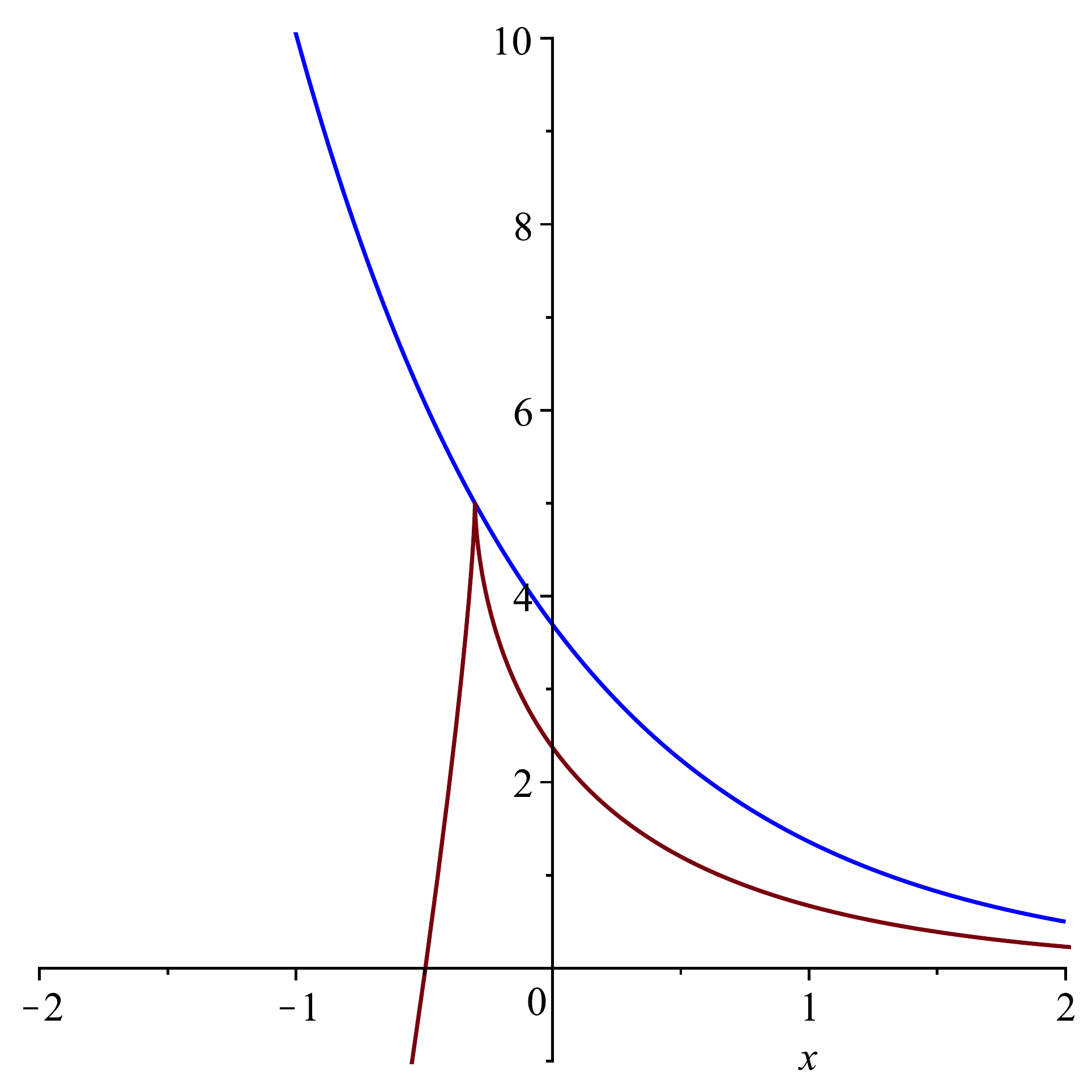}}	
\fbox{\includegraphics[width=3.8cm]{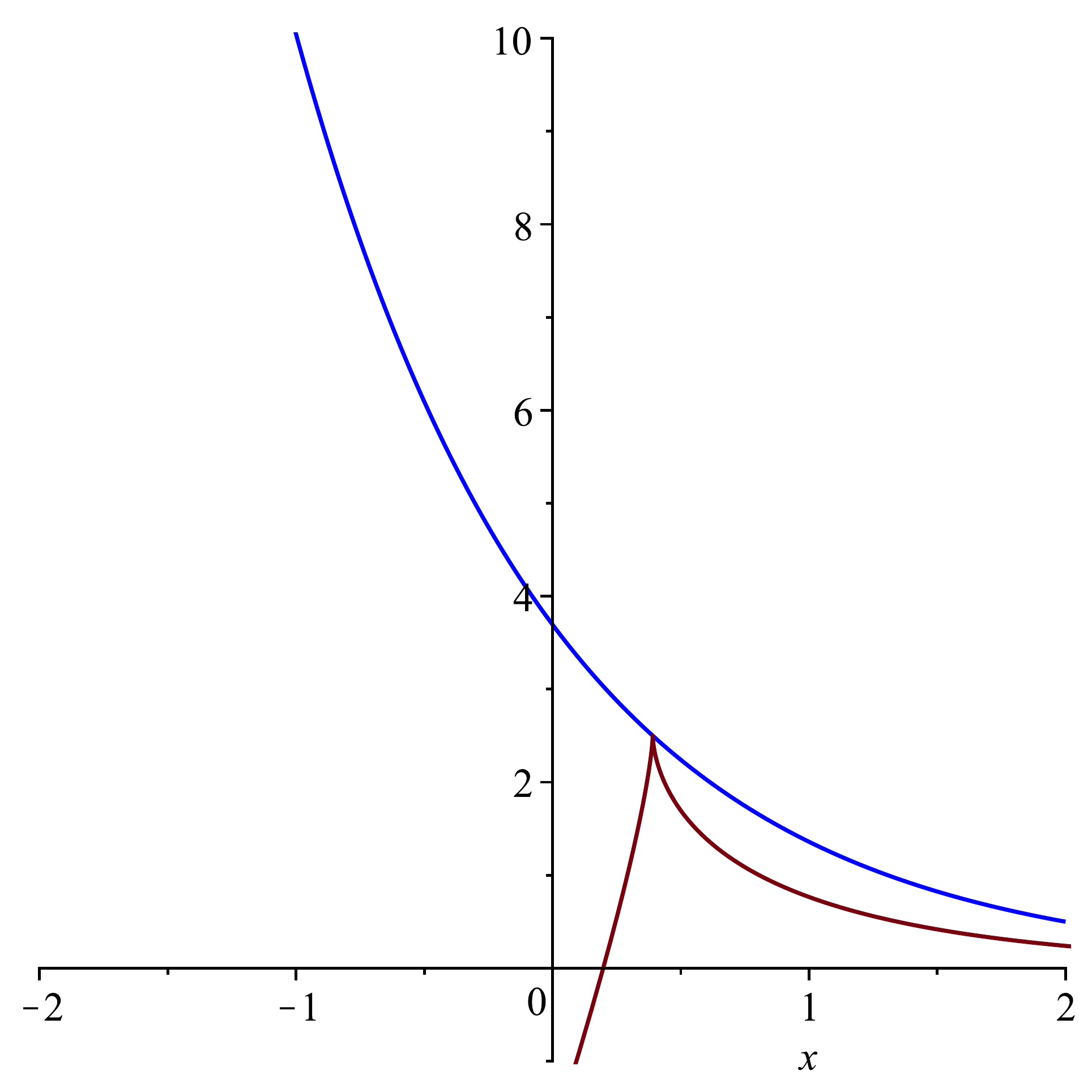}}
\caption{The curve of singularities in the $(x,u)$-plane (blue) plotted against the solution of Cauchy data $u(1,x)=e^{-x}$ at $t=1,1.2,1.4$ (red).}
\label{Cauchyexp}
\end{figure}

Let us find a solution to the Cauchy data $u_0(x)=u(1,x)=x^2$. The functional equation (\ref{eq:Cauchy}) gives $g(w)=\frac{8}{(2+w)^4}$. By integrating and eliminating $w$ from (\ref{eq:HSsol}), we get a solution with
\[u(1,x)=(x+C(1)+1)^2-1-C'(1).\] So let $C(t)=-t$ (the choice is not unique). Then we get the implicit solution 
\begin{align*}
&(t-1)^4 u^3-3 (2 t x-2 x+1) (t-1)^2 u^2+3 (2 t x-2 x+1)^2 u \\&(2-8x^3) (t-1)-3 x^2+6 (t-1)^2 x+(t-1)^4=0.
\end{align*}
If we solve for $u$, we get 
\begin{equation*}
u(t,x)=\frac{2x(t-1)+1-\left((t-1)^3+3x(t-1)+1\right)^{2/3}}{(t-1)^2}.
\end{equation*}

Considering these as curves in the $(x,u)$-plane, parametrized by $t$, we can find singular points. They are given by 
\[x= -\frac{(t^2-3t+3) t}{3(t-1)}, \qquad u=-\frac{2(t-1)^3-1}{3(t-1)^2}.\]
As $t\to 1$ we see that $(x,u) \to (\pm \infty, \infty)$. Eliminating $t$ from the equations above results in 
\[3x^2 u^2+4 x^3-u^3+1=0.\]
See Figure \ref{Cauchy}. 

\begin{figure}[h]
\centering
\fbox{\includegraphics[width=3.8cm]{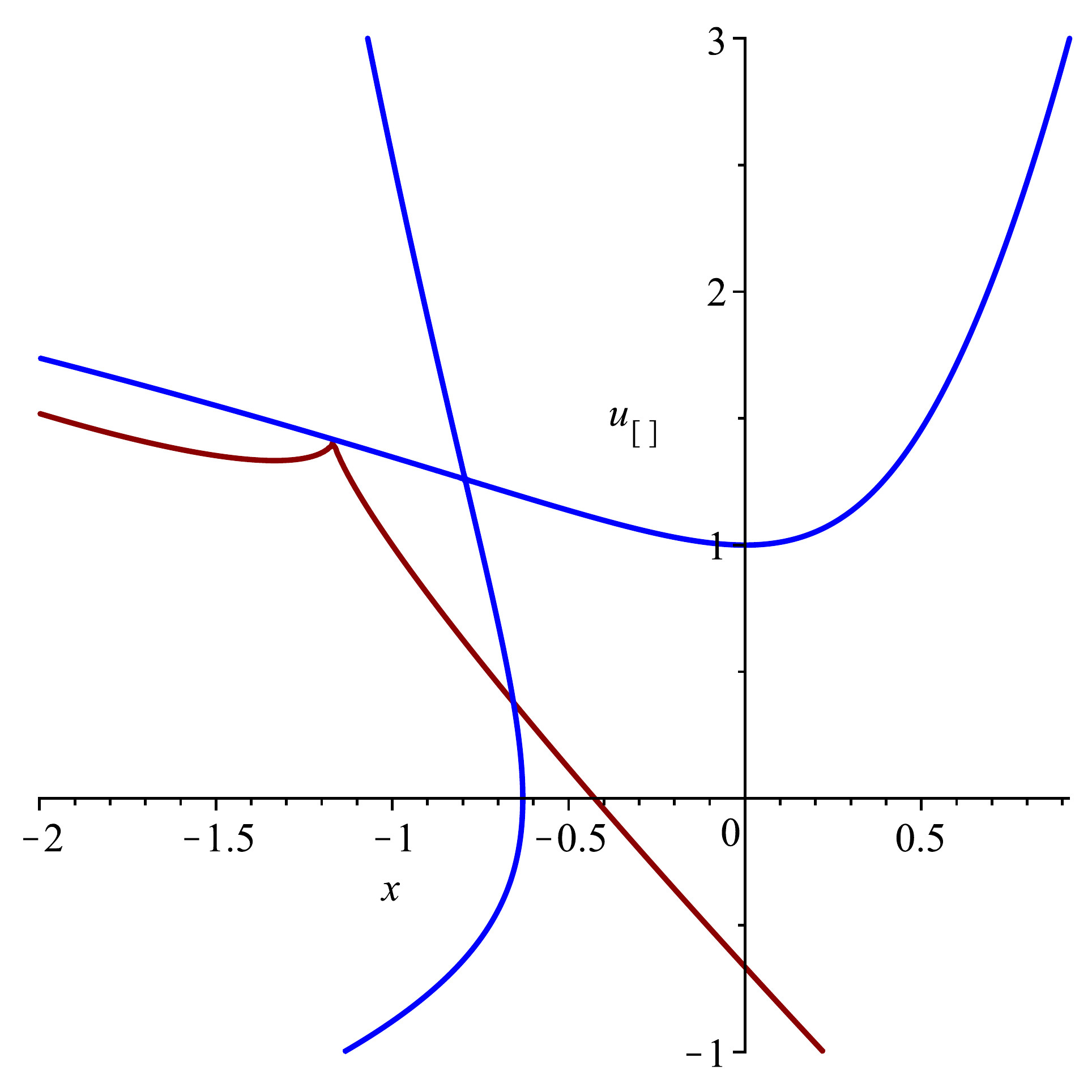}}
\fbox{\includegraphics[width=3.8cm]{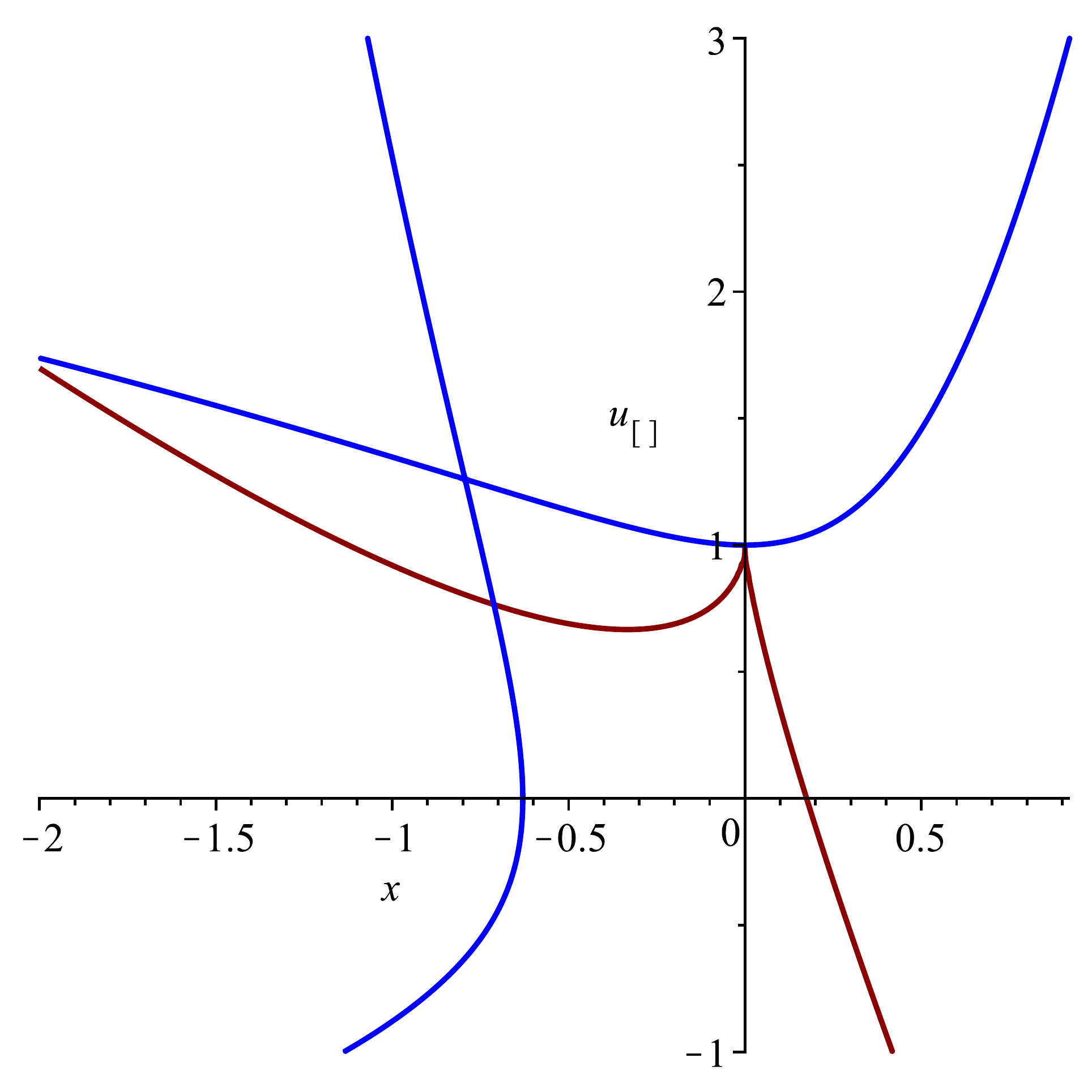}}	
\fbox{\includegraphics[width=3.8cm]{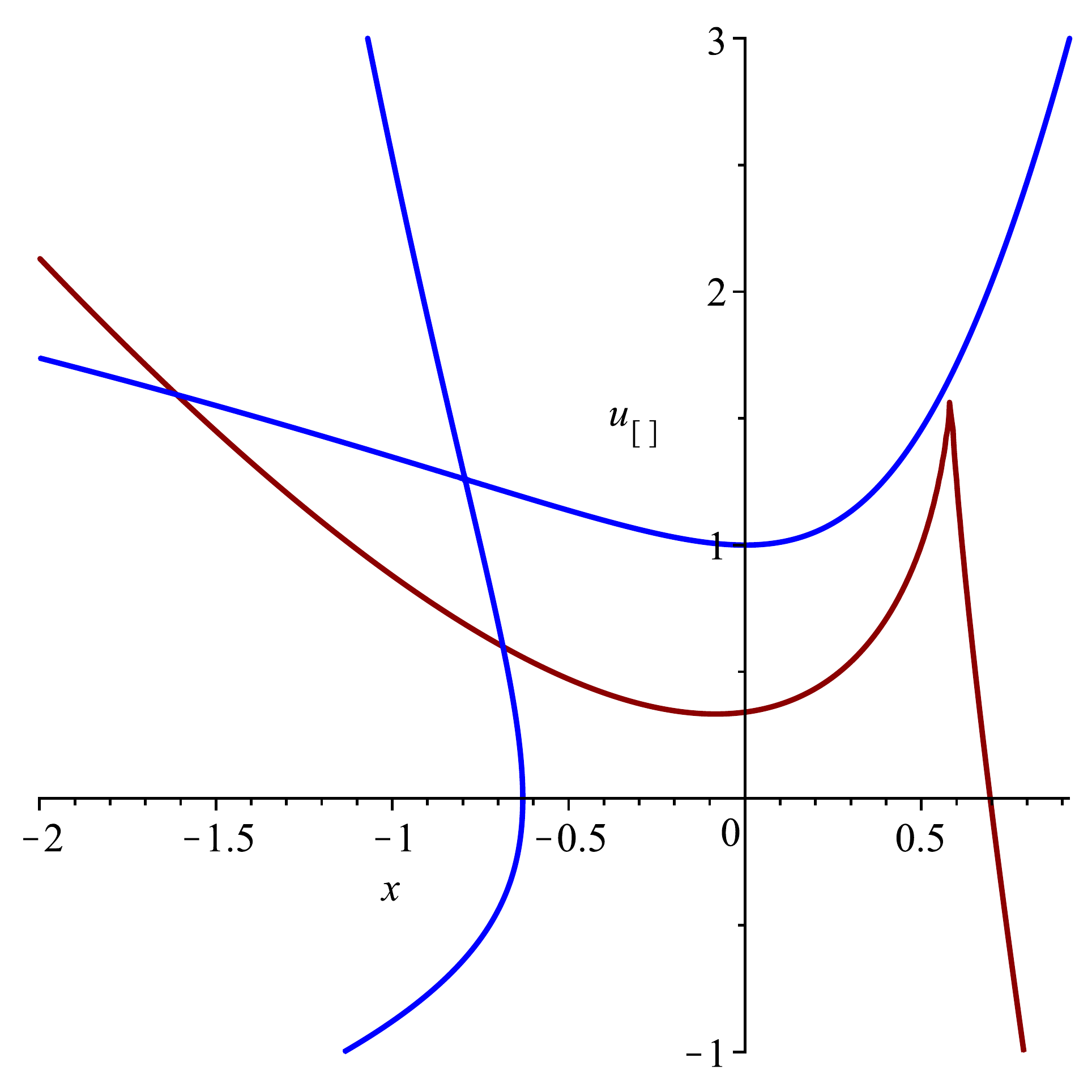}}\\
\fbox{\includegraphics[width=3.8cm]{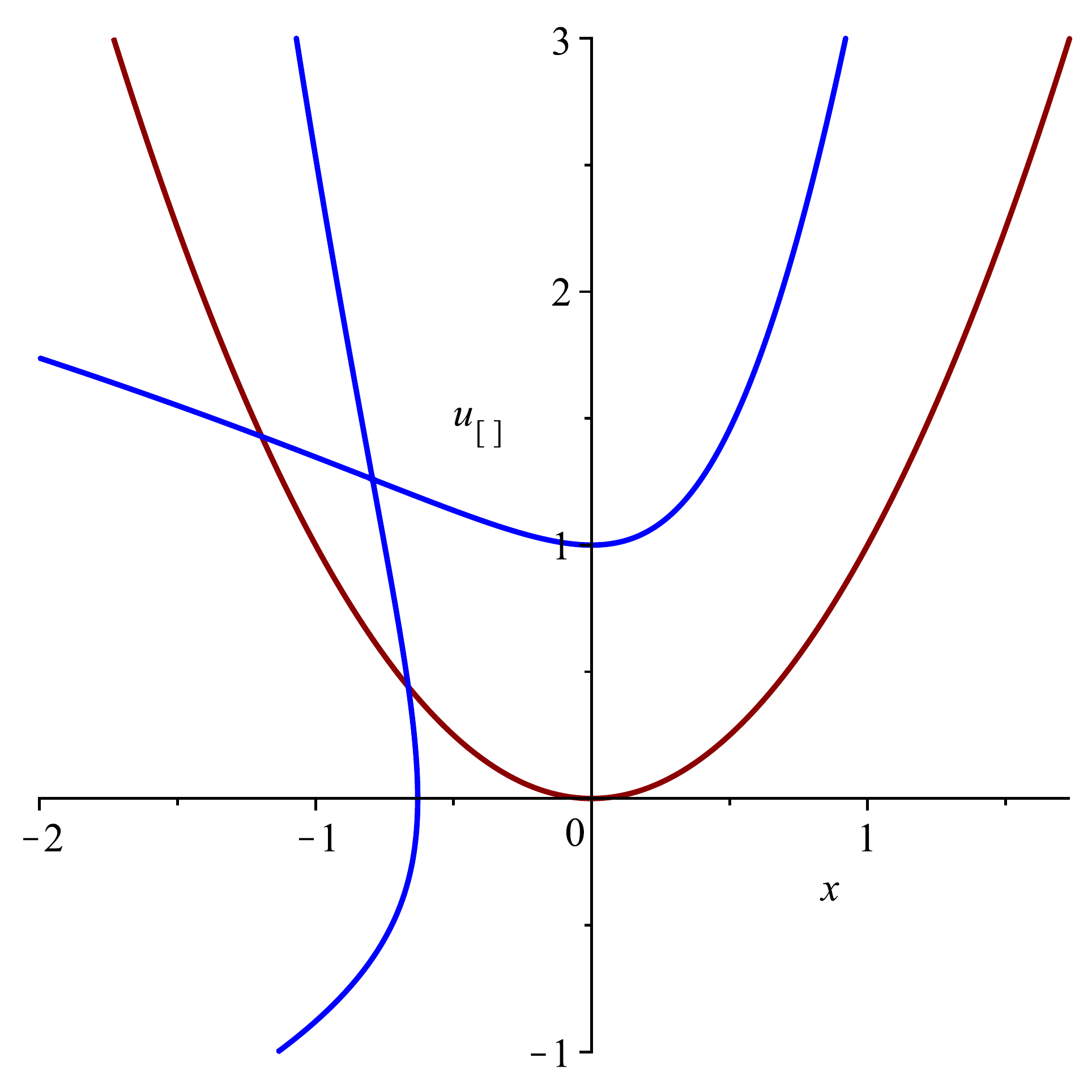}}
\fbox{\includegraphics[width=3.8cm]{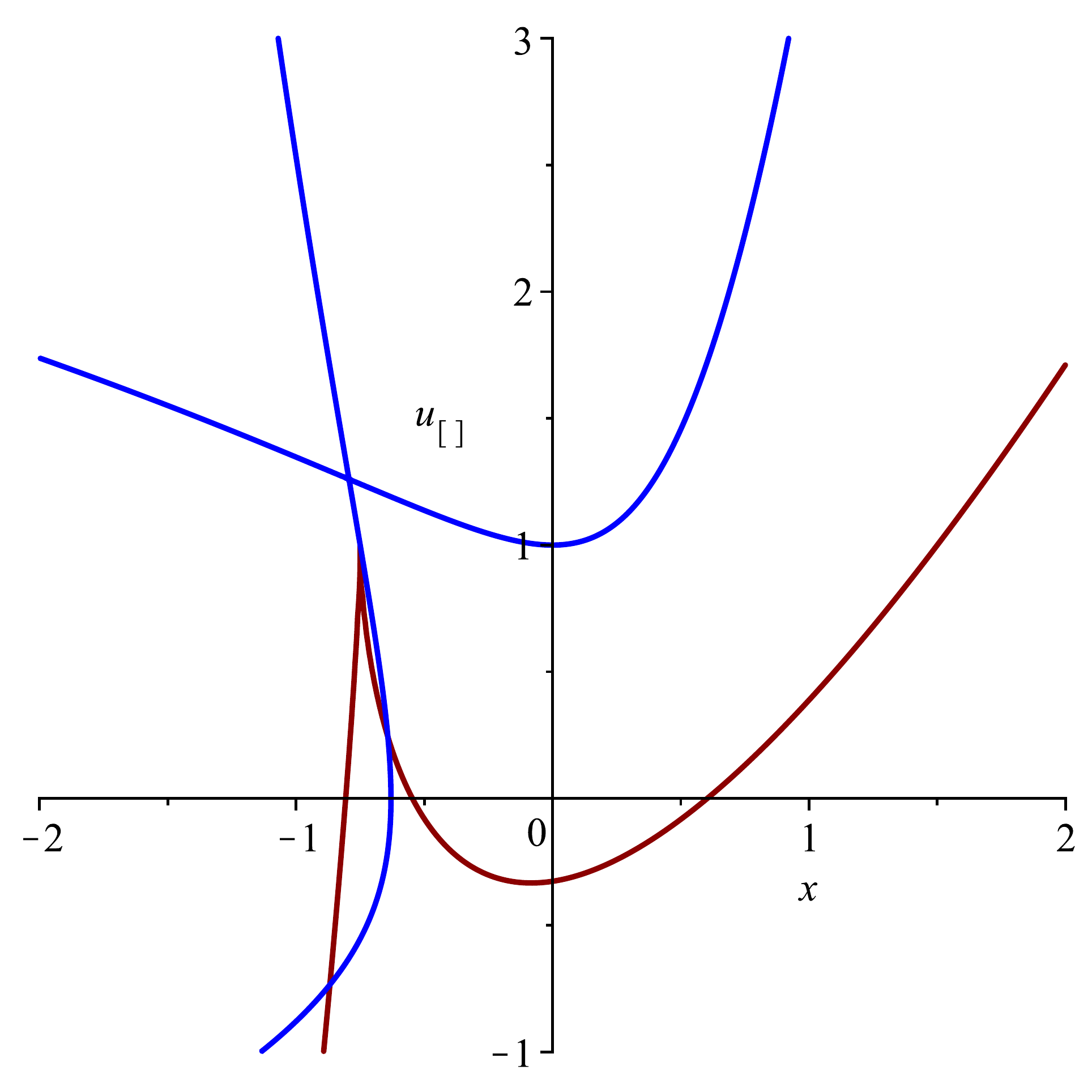}}
\fbox{\includegraphics[width=3.8cm]{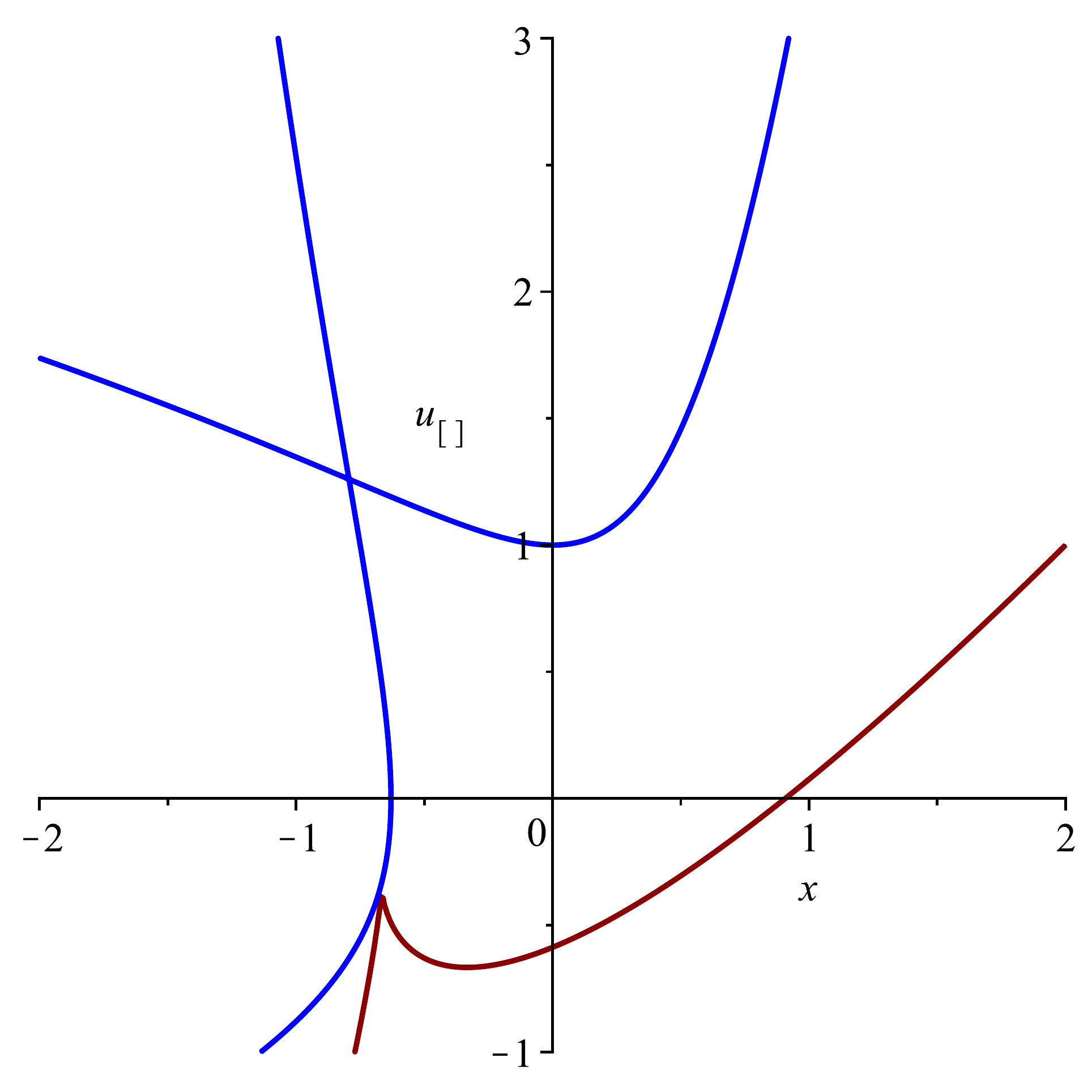}}
\caption{The curve of singularities in the $(x,u)$-plane (blue) plotted against the solution of $u(1,x)=x^2$ at $t=-1,0,1/2,1,3/2,2$ (red).}
\label{Cauchy}
\end{figure}

If we try to solve the initial value problem $u_0(x)=u(0,x)=x^2$, the method will not work. In this case we get $g(w)\equiv 1/2$, and we get the implicit solution 
\[t^4 u^3-6t^3 xu^2+12 t^2 x^2u-8tx^3+9x^2=0.  \]
We notice that for $t=0$, this gives $9x^2=0$. 

The initial time $t=0$ is special due to our choice of $G$, so we should solve the initial value problem away from this point. Note that this issue does not reflect a property of the Hunter-Saxton equation since it has time-translations as symmetries. In particular, we may translate the solution with $u(1,x)=x^2$, to a solution satisfying $u(0,x)=x^2$. This solution is given by 
\[u(t,x)=\frac{1+2tx-(t^3+3tx+1)^{2/3}}{t^2}.\]

\subsection{The action of the remaining symmetries on the quotient}
We chose a particular Lie subalgebra of the symmetry Lie algebra of the Hunter-Saxton equation, and computed the quotient PDE. Each solution of the quotient is given by a function $g$. The PDE $\{F=0,G=0\}$ is symmetric with respect to the vector fields $Y_f$, but not with respect to the rest of the symmetries of the Hunter-Saxton equation. The flows of the remaining symmetries will act on the function $g$. Intuitively, they shuffle equivalence classes of solutions. 
\begin{align*}
\partial_t \qquad &\leftrightarrow \qquad g(w) \mapsto \frac{16 g\left(\frac{2w}{2-sw}\right)}{(2-sw)^4} \\
t \partial_t-x \partial_x-2 u \partial_u \qquad &\leftrightarrow \qquad g(w) \mapsto g(e^{-s}w) \\
x\partial_x+u\partial_u \qquad & \leftrightarrow \qquad g(w) \mapsto e^{-s} g(w)\\ 
t^2 \partial_t+2tx\partial_x+2x\partial_u \qquad & \leftrightarrow \qquad g(w) \mapsto g(w+2s)
\end{align*}


\section{Solving PDEs with first order quotient} \label{Main}
By considering several different second-order PDEs with infinite-dimensional symmetry Lie algebra, we illustrate the applicability of the ideas explained above.   The PDEs we consider will be invariant under one of four different infinite-dimensional Lie algebras of the form $\mathfrak g=\langle X_f \mid f(t) \in C^{\infty}_{\text{loc}}(\mathbb R) \rangle$,  with generators
\begin{enumerate}
 \item $X_f= f(t) \partial_x+f'(t) \partial_u $,
 \item $X_f = f(t) \partial_t-f'(t) \partial_u$,
 \item $X_f= f(t) \partial_t$,
 \item $X_f =  f(t) \partial_u$.
\end{enumerate}
They are considered in sections \ref{sym1}, \ref{sym2}, \ref{sym3} and \ref{sym4}, respectively. Notice that all these vector fields are projectable to $\mathbb R^2(t,x)$, so their flow takes graphs of functions (on $\mathbb R^2 \times \mathbb R$) to graphs of functions. 

For each of these infinite-dimensional Lie algebras we will find the general invariant PDE of the form
\[F=u_{tx}-\varphi(t,x,u,u_t,u_x,u_{tt},u_{xx})=0.\] 
Notice that PDEs for which $\partial_{u_{tx}}(F)=0$ holds have, for all our Lie algebras, the additional properties $\partial_{u_{t}}(F)=0$, $\partial_{u_{tt}}(F)=0$.  They are essentially ODEs, possibly parametrized by $t$.

In order for the global Lie-Tresse theorem (\cite{KL2}) to apply, we must require the fibers of $\mathcal E_k \to  J^0(\mathbb R^2)$ to be (irreducible) algebraic manifolds. This restricts $\varphi$ even more. We assume that the equations in this chapter satisfy this condition.

We will find the differential invariants and quotient PDE for the general symmetric PDE, and then take a closer look at more specific PDEs. 
We will focus on computations, with the aim of getting a feeling for how the ideas explained above can be used efficiently. For some PDEs we write down the general solution, and for others we will be satisfied with only solving the quotient PDE.

In Section \ref{finitesym} we discuss the ideas from a slightly different angle. We realize that we can consider finite-dimensional Lie subalgebras of $\mathfrak g$ to obtain two first-order syzygies, as we did for Burgers' equation in Section \ref{Burgers}. But because the Lie algebra in these other cases is a subalgebra of infinite-dimensional ones, these syzygies are partially uncoupled.

\subsection{Symmetries of type 1} \label{sym1}
Consider the Lie algebra spanned by vector fields of the form $f(t) \partial_x+f'(t) \partial_u$. The equation \[u_{tx}=\varphi(t,x,u,u_t,u_x,u_{tt},u_{xx})\] is invariant if and only if $\varphi=-\alpha(t,u_x,u_{xx})-uu_{xx}$. Thus we consider the PDE
\[F=u_{tx}+uu_{xx}+\alpha(t,u_x,u_{xx}).\]
When $\alpha=u_x^2/2$ we get the Hunter-Saxton equation, considered above. The algebra of differential invariants is generated by 
\[I=t, \qquad J=u_x, \qquad H=u_{xx}.\]
The quotient PDE is given by 
\[\hat \partial_I(H)-(\alpha-H \alpha_H) \hat \partial_J(H)+(J+\alpha_J)H=0. \]

\textbf{Example 1.1:} Consider the PDE given by $u_{tx}+uu_{xx}+\alpha(u_x)=0$. A special instance of this is the ``generalized Hunter-Saxton equation'', with $\alpha(u_x)=\epsilon u_x^2$, which was considered in \cite{Pavlov} and \cite{MorozovHS}. Its quotient PDE is given by $\hat \partial_I(H)-\alpha(J) \hat \partial_J(H)+(J+\alpha'(J)) H=0$ which has general solution 
\[Hg\left(\int \frac{dJ}{\alpha(J)} +I\right)= e^{\int \frac{J+\alpha'(J)}{\alpha(J)} dJ}.\] This gives the additional equation \[u_{xx} g\left(\int \frac{du_x}{\alpha(u_x)} +t\right)= e^{\int \frac{u_x+\alpha'(u_x)}{\alpha(u_x)} du_x} \] which can be considered as a first-order PDE in $u_x$. Its solution is given implicitly by 
\[x= \int \frac{g\left(\int \frac{du_x}{\alpha(u_x)} +t\right)}{e^{\int \frac{u_x+\alpha'(u_x)}{\alpha(u_x)} du_x}} du_x+C(t).\] By solving the original PDE for $u$, we get 
\[u=-\frac{u_{tx}-\alpha(u_x)}{u_{xx}},\] where $u_{tx}$ and $u_{xx}$ may be replaced by functions of $t$ and $u_x$ (in the same way as for the Hunter-Saxton equation). We end up with a solution parametrized by $t$ and $u_x$. 

\textbf{Example 1.2:} Consider the PDE $u_{tx}+uu_{xx}+\alpha(t,u_x) u_{xx} =0$. The quotient PDE is given by $\hat \partial_I(H)+(J+\alpha_J H)H=0$. It has general solution 
\[H=\frac{e^{-IJ}}{g(J)+\int \alpha_J(I,J)e^{-IJ} dI}.\] By considering this as a first-order PDE on $u_x$ we get the implicit solution 
\[x= \int e^{t u_x} \left(g(u_x)+ \int e^{-tu_x}\alpha_{u_x}(t,u_x) dt  \right) du_x +C(t).\]
We also have
\[u=-\frac{u_{tx}}{u_{xx}}-\alpha(t,u_x),\] where $u_{tx}$ and $u_{xx}$ can be eliminated to give a parametrization of the solution by $t$ and $u_x$. 

\textbf{Example 1.3:} Consider the PDE $u_{tx}+uu_{xx}+u_{xx}^2=0$. The quotient PDE is given by $\hat \partial_I(H)+H^2 \hat \partial_J(H)+JH=0$ whose general solution is given implicitly by 
\[\left(g(J^2+H^2)-I \right) \sqrt{J^2+H^2}+\text{arctanh}\left(\frac{J}{\sqrt{J^2+H^2}} \right)=0. \] Again, this is a first-order PDE on $u_x$, but contrary to the previous cases this PDE can not be solved easily as a first-order, separable ODE.

\subsection{Symmetries of type 2} \label{sym2}
Consider the Lie algebra spanned by $f(t) \partial_t-f'(t) \partial_u$. The general invariant second-order PDE, assuming it can be solved for $u_{tx}$, is of the form
\[u_{tx}-\alpha(x,u_x,u_{xx}) e^u=0.\]
The differential invariants are generated by 
\[I=x,\quad J=u_x, \quad H=u_{xx},\qquad \hat \partial_I=-\frac{u_{xx}}{u_{tx}} D_t+D_x, \quad \hat \partial_J=\frac{1}{u_{tx}} D_t\] 
and the quotient PDE is given by 
\[\alpha_H \hat \partial_I(H)+\left(H \alpha_H-\alpha \right) \hat \partial_J(H)+\alpha_J H+\alpha J+\alpha_I=0.\]
Let $H=g(I,J)$ be a solution of this equation. It gives an equation $u_{xx}=g(x,u_x)$, which can be viewed as a first-order PDE on $u_x$. Now, let $u_x(t,x)=w(t,x)$ be a solution of this PDE. Inserting it into $u_{tx}=\alpha e^u$ results in the solution
\[u(t,x)= \ln \left( \frac{w_t(t,x) }{\alpha(x,w(t,x),w_x(t,x))}\right).\]
Let us consider a few different choices of $\alpha$. 

\textbf{Example 2.1:} Assume that $\alpha=H \beta(I,J)$. The quotient PDE now takes the form 
\[\beta \hat \partial_I(H)+\beta_J H^2+\beta J H +\beta_I H=0\]
and has general solution 
\[H e^{IJ} \beta(I,J) \left(\int \frac{\beta_J(I,J)}{e^{IJ} \beta(I,J)^2} dI +g(J)\right) = 1.\]
This gives 
\[u_{xx} e^{x u_x} \beta(x,u_x) \left(\int \frac{\beta_{u_x}(x,u_x)}{e^{x u_x} \beta(x,u_x)^2} dx +g(u_x)\right) = 1.\]

\textbf{Example 2.2:} Let us now assume that $\alpha=\alpha(I,J)$. Then the quotient reduces to 
\[-\alpha \hat \partial_J(H)+\alpha_J H+ \alpha J+\alpha_I=0\]
which has general solution 
\[H=\left( \int \frac{J \alpha(I,J)+\alpha_I(I,J)}{\alpha(I,J)^2} dJ+g(I)\right) \alpha(I,J).\]  Inserting the expressions for $I,J,K$ gives 
\[u_{xx}=\left( \int \frac{u_x \alpha(x,u_x)+\alpha_x(x,u_x)}{\alpha(x,u_x)^2} du_x+g(x)\right) \alpha(x,u_x). \] 

\textbf{Example 2.3:} We continue the computations here for $\alpha \equiv -1$. In this case the quotient is given by
\[\hat \partial_J(H)-J=0.\] 
Its general solution is 
\[ H=\frac{1}{2} J^2+ g(I)\]
which gives the differential constraint
\[ u_{xx}= \frac{1}{2} u_x^2 + g(x).\]
Interpreted as a first-order ODE on $u_x$ this is a Riccati equation. The transformation $u_x(t,x)=-2 v_x(t,x)/v(t,x)$ gives 
\[ 2\frac{-v_{xx} v+(v_x)^2}{v^2}=2 \frac{(v_x)^2}{v^2}+g(x)\] 
and 
\begin{equation}
2 v_{xx}+g(x)v=0. \label{eq:Shrod}
\end{equation}
Thus the problem of solving $u_{tx}+e^u=0$ is reduced to solving a first-order linear ODE. We refer to \cite{LychaginIntegrability} for a way to find explicit solutions to the Shrödinger type equation (\ref{eq:Shrod}).

\subsection{Symmetries of type 3} \label{sym3}
Consider the Lie algebra spanned by $f(t) \partial_t$. Assuming that the PDE can be solved for $u_{tx}$, the general invariant second-order PDE is given by 
\begin{equation}
u_{tx}-u_t \alpha(x,u,u_x,u_{xx})=0. \label{eq:ScalingsEq}
\end{equation}
The differential invariants are generated by 
\[I=x,\qquad J=u, \qquad H=u_x,\qquad \hat \partial_I=-\frac{u_x}{u_t} D_t+D_x, \qquad \hat \partial_J=\frac{1}{u_t} D_t\] 
and the quotient PDE is given by 
\[H_J-\alpha(I,J,H,H_I+H H_J)=0.\]

Assume that we have a solution of the form $H=g(I,J)$. This gives an equation $u_x=g(x,u)$, of which (\ref{eq:ScalingsEq}) is a differential consequence. Thus, solving the second-order PDE (\ref{eq:ScalingsEq}) amounts to solving, sequentially, two first-order PDEs. 

We focus on a few different choices of $\alpha$. 

\textbf{Example 3.1:} If $\alpha=\alpha_1(x,u) u_x+\alpha_2(x,u)$, the quotient equation of (\ref{eq:ScalingsEq}) is 
\[\hat \partial_J(H)-\alpha_1(I,J) H-\alpha_2(I,J)=0 \]  and its general solution is
\[H=\left(\int \alpha_2(I,J) e^{-\int \alpha_1(I,J)dJ} dJ+g(I)    \right) e^{\int \alpha_1(I,J)dJ}.\]
Inserting the expressions for the invariants gives the PDE
\[u_x=\left(\int \alpha_2(x,u) e^{\int \alpha_1(x,u)du} du+g(x)    \right) e^{\int \alpha_1(x,u)du}.\]

\textbf{Example 3.2:} Consider the PDE $u_{tx}=u_t(\alpha_1(x) u+\alpha_2(x))$. Its quotient PDE is $\hat \partial_J(H)=\alpha_1(I) H+\alpha_2(I)$. The quotient's general solution is $H=\frac{1}{2} \alpha_1(I) J^2+\alpha_2(I) J + g(I)$, or $u_x=\frac{1}{2} \alpha_1(x) u^2+\alpha_2(x) u +g(x)$. This is a Riccati equation. Choosing the the set of solutions for which $g \equiv 0$ lets us write them down explicitly:
 \[u(t,x) = \frac{2 e^{\int \alpha_2(x) dx}}{C(t)-\int \alpha_1(x) e^{\int \alpha_2(x) dx} dx}\]
 
In the case when $\alpha_1\equiv 0$ and the PDE is linear, we are able to write down the general solution 
 \[u(t,x)=\left(\int g(x) e^{-\int\alpha_2(x) dx} dx+C(t) \right) e^{\int \alpha_2(x) dx}.\]
 
\textbf{Example 3.3:} Consider the equation $u_{tx}=u_t u_x$. Its quotient is $\hat \partial_J(H)=H$. This gives $H=g(I) e^J$, or $u_x=g(x) e^u$. Thus, the general solution of $u_{tx}=u_t u_x$ is \[u(t,x)= \ln\left(\frac{1}{C(t)-\int g(x) dx}\right).\]

\subsection{Symmetries of type 4} \label{sym4}
Consider the Lie algebra spanned by $f(t) \partial_u$. Assuming the PDE can be solved for $u_{tx}$, the general invariant second-order PDE is given by 
\begin{equation}
u_{tx}=\alpha(t,x,u_x,u_{xx}). \label{eq:TranslationsEq}
\end{equation}
Note that this is a first-order PDE in $u_x$. The differential invariants are generated by 
\[ I=t, \qquad J=x, \qquad H= u_x, \qquad \hat \partial_I= D_t, \qquad \hat \partial_J = D_x\]
and the quotient PDE is given by 
\[\hat \partial_I(H) =\alpha(I,J,H, \hat \partial_J(H)).\]
The quotient is exactly (\ref{eq:TranslationsEq}) treated as a first-order PDE on $u_x$.
\begin{remark}
This shows that all first-order scalar PDEs can be obtained as a quotient of a second-order PDE. 
\end{remark}
Assume that a solution can be written as $H=g(I,J)$ for some function $g$. This gives us an equation $u_x=g(t,x)$ which can be added to (\ref{eq:TranslationsEq}), and in fact, (\ref{eq:TranslationsEq}) is just a differential consequence of this first-order PDE. The function $u(t,x)= \int g(t,x) dx+C(t)$ will be a solution to the original equation. We solve some concrete examples. 

\textbf{Example 4.1:} Consider the PDE $u_{tx}=u_x^A$, with constant $A \neq 1$. The quotient PDE is given by $\hat \partial_I(H)=H^A$ and has general solution $H=(g(J)+(1-A) I)^{1/(1-A)}$. This gives the PDE $u_x=(g(x)+(1-A) t)^{1/(1-A)}$ which is integrated to \[u(t,x)= \int \big(g(x)+(1-A) t\big)^{\frac{1}{1-A}} dx +C(t).\]

\textbf{Example 4.2:} Consider the PDE $u_{tx}=u_x^A u_{xx}$. The quotient PDE is given by $\hat \partial_I(H)=H^A \hat \partial_J(H)$ and has general solution $J+I H^A-g(H)=0$. This gives the PDE $x+t u_x^A-g(u_x)=0$. Solving for $u_x$ and integrating gives the general solution from the equivalence class determined by $g$. 

\textbf{Example 4.3:} Consider the PDE $u_{tx}=\alpha(t,x) u_x^2+\beta(t,x) u_x+\gamma(t,x)$. The quotient is given by $\hat \partial_I(H)=\alpha(I,J) H^2+\beta(I,J) H+\gamma(I,J)$, a Riccati equation. If $\gamma \equiv 0$, then \[H=\frac{e^{\int\beta(I,J) dI}}{g(J)-\int \alpha(I,J) e^{\int \beta(I,J) dI}dI }.\] Solving this PDE gives the general solution:
\[u(t,x)= \int \left(\frac{e^{\int\beta(t,x) dI}}{g(x)-\int \alpha(t,x) e^{\int \beta(t,x) dt}dt } \right) dx+C(t) \]

Some of the equations we solve here may look too trivial to be worth considering. Even though a part of their simplicity is a consequence of their symmetry Lie algebra, one reason they seem trivial is that we have written them down in the right coordinates. Let us illustrate this with an example. Consider the PDE 
\[-x^2 u_t^2+2 x^2 u_t u_x-x^2 u_x^2+2xuu_t-2xuu_x-xu_{tt}+x u_{tx}-u^2+u_t=0.\]
Even though the equation looks complicated, its symmetries are easily computed. In particular, we find that all vector fields of the form $\frac{f(t+x)}{x} \partial_u$ are symmetries. Thus, we may either look for the point-transformation that brings this to $f(t)\partial_u$ (and the PDE to $u_{tx}=u_x^2$ which is treated above), or we can find the quotient PDE directly. The algebra of differential invariants, with respect to the Lie algebra spanned by vector fields of the form $\frac{f(t+x)}{x} \partial_u$, is generated by 
\[I=t, \qquad J=x, \qquad H=u+x(u_x-u_t), \qquad \hat \partial_I=D_t, \qquad \hat \partial_J=D_x.\] 
The quotient PDE is given by $H_I=H^2$ which has general solution $H=\frac{1}{g(J)-I}$. It gives $u+x(u_x-u_t)=\frac{1}{g(x)-t}$, a new first-order PDE. Its solution is 
\[u(t,x)=\frac{1}{x} \left(\int \frac{d\tau}{\tau-g(x+t-\tau)}  +C(t+x)\right)\Bigg|_{\tau=t}.\]

\subsection{Solving the PDEs using finite-dimensional Lie algebras} \label{finitesym}
We note that  all the examples we considered in this and the previous section has a special property: The PDE we get by adding an additional differential constraint $G=0$ (of order 1 or 2) to $F=0$ is of infinite type. Since one would in general expect the result to be a finite type equation (\cite{Seiler}), all our examples are quite special. This can be explained by the infinite-dimensional symmetry Lie algebra. It is clear (due to our choice of coordinates) that neither $F$ nor $G$ will depend on $u_{tt}$. And since they are compatible, the prolonged equations will not depend on $u_{t^i}$ for every integer $i \geq 2$. 

This is different from what we got when we found the quotient of Burgers' equation in Section \ref{Burgers}. In that case we got a finite type equation with a three-dimensional solution space. We argued that any solution to the quotient PDE of Burgers' equation would determine a five-dimensional submanifold in $J^2(\mathbb R^2)$ on which the Cartan distribution was two-dimensional and completely integrable, with the symmetries acting transitively on the set of integral manifolds. Thus, given a solution of the quotient, the Lie-Bianchi theorem would let us find solutions in quadratures. What prohibited us from going through with this was our inability to solve the quotient PDE. 

In this section we will see that if we consider finite-dimensional Lie subalgebras of the infinite-dimensional ones, we get a situation similar to that of Burgers' equation, but now with partially uncoupled quotient PDEs that we can solve, since one of the two first-order PDEs is the same as the quotient with respect to the infinite-dimensional symmetry Lie algebra. We consider two examples. 

\textbf{The Hunter-Saxton equation:} 
Consider again the Hunter-Saxton equation $(u_t+uu_x)_x=u_x^2/2$, but now with the three-dimensional symmetry Lie algebra spanned by  $\partial_x$, $t \partial_x+\partial_u$ and $t^2 \partial_x+2t \partial_u$. It is a Lie subalgebra of the one we already considered in Section \ref{HS}. 
In addition to the invariants $I=t,J=u_x,H=u_{xx}$ we found in Section \ref{HS}, we now have an additional second-order invariant $K=u_{tt}-u^2 u_{xx}+u_t u_x$. With these generators, we get two first-order syzygies instead of one: 
\[ 2 \hat \partial_I (H)-J^2 \hat \partial_J (H) + 4 JH = 0, \qquad \hat \partial_J(K) = 0\] Now we are in a similar situation as we were with  Burgers' equation, but with equations that are decoupled.
The general solution is 
\[ 16 g\left(\frac{2J}{2-IJ}\right) H-(2-IJ)^4=0, \qquad K=C(I), \]
where we use similar notation as in Section \ref{HS}. This gives two second-order differential constraints:
\[ 16 g\left(\frac{2u_x}{2-tu_x}\right) u_{xx}-(2-tu_x)^4=0, \qquad u_{tt}-u^2 u_{xx}+u_t u_x=C(t) \] Together with the Hunter-Saxton equation $(u_t+uu_x)_x=u_x^2/2$, they determine a five-dimensional submanifold of $J^2(\mathbb R^2)$. The restriction of the Cartan distribution to this manifold is a two-dimensional integrable distribution.

\textbf{Liouville's equation:} 
Consider now $u_{tx}+e^u=0$ with its three-dimensional symmetry Lie algebra $\langle \partial_t, t \partial_t-\partial_u,t^2 \partial_t-2t \partial_u\rangle$. We use the differential invariants $I=x,J=u_x,H=u_{xx}$ as in Section \ref{sym2}. In addition we have one more second-order invariant $K=(2u_{tt}-u_t^2)e^{-2u}$. The quotient PDE is given by the first-order system
\[ \hat \partial_J(H)-J=0, \qquad \hat \partial_I(K)+H \hat \partial_J(K)+2JK=0.\]
The first equation gives $H=J^2/2+g(I)$. Inserting this into the second equation gives us a pure first-order PDE on $K$. 

Again, any solution to this system will give two additional differential constraints that together with $u_{tx}+e^u=0$ determine a five-dimensional manifold in $J^2(\mathbb R^2)$ on which the restriction of the Cartan distribution is two-dimensional. We have Lie algebra acting transversally on the distribution, as in the case for the Hunter-Saxton equation, but now the Lie algebra is not solvable, and we can not use the Lie-Bianchi theorem. This type of situation is treated in \cite{LychaginIntegrability}.

\section{Conclusion} \label{Conclusion}
We have shown, using several examples, how the theory of differential invariants can give us insight into the solution space of PDEs, and in some cases even lets us write down the general solution. The idea is very general and natural, and we summarize it here in an informal way.

Given a PDE $\mathcal E$ and a Lie algebra $\mathfrak g$ of symmetries, the obvious thing to do is to look for objects defined on $\mathcal E$ that are $\mathfrak g$-invariant. The scalar differential invariants are among the simplest invariant objects, and they can be generated by a finite set, as a differential algebra. In general, the algebra is not freely generated, so there are differential syzygies giving relations between the generators and their derivatives. The differential syzygies define a PDE, called the quotient PDE, whose solutions  correspond to equivalence classes of solutions of $\mathcal E$. 

Each solution of the quotient gives us additional differential constraints that can be added to the defining equation for $\mathcal E$. This amounts to restricting to an orbit of the $\mathfrak g$-action on the solution space of $\mathcal E$, resulting in a new PDE on which $\mathfrak g$ acts transitively on solutions. 
In cases where the Lie algebra of symmetries gives a quotient that is nontrivial and significantly different from $\mathcal E$, we can use the quotient as a stepping-stone to get insight into the space of solutions of $\mathcal E$. 

 We illustrated, with several examples, how the ideas put forward in this paper can be implemented in practice. For the special class of PDEs that we considered, we saw that solution of second-order PDEs could be obtained by solving two first-order PDEs, a situation very similar to that of symmetry reduction of ODEs. It is worth noting that even when we are not able to find general solutions using these ideas, they will give us a better understanding of the PDE under consideration. 

\vspace{0.5cm}

\noindent \textbf{Acknowledgements:} This project was supported by the Czech Science Foundation (GA\v{C}R no. 19-14466Y).

\section*{Appendix: Quotient of Burgers' equation}
Let us compute the differential invariants and quotient PDE for Burgers' equation with respect to its five dimensional Lie algebra of symmetries. The following four functions are invariant:
\begin{gather*}
I =\frac{ u_{xxx}^3}{u_{xx}^4}, \qquad J = \frac{u_{xxx} u_{xxxx}}{u_{xx}^3}, \qquad 
H = \frac{u_{xxxxx}}{u_{xx}^2}, \qquad K= \frac{u_{xxx}^2 u_{xxxxxx}}{u_{xx}^5}
\end{gather*}
Note that the expressions by which we have written down the invariants hide the fact that $I$ and $J$ are second-order invariants while $H$ and $K$ are of third order. 

If we take $I$, $J$ and $H$ as generators of the algebra of differential invariants, together with the invariant derivations $\hat \partial_I$ and $\hat \partial_J$, the quotient can be written as a second-order differential syzygy:
\begin{gather*}
I^2 \big(4 I-3 J\big)^2 \hat \partial_I^2(H)+2 I \big(4 I-3 J\big) \big((3 J-H) I-J^2\big) \hat \partial_J \hat \partial_I (H) \\+\big((3 J-H) I-J^2\big)^2 \hat \partial_J^2(H)+I \big((9-2 I) I+6 (I-J)^2\big) \hat \partial_I (H) \\-I \big((2 (I-J)) H-10 I+J (2 J-3)\big) \hat \partial_J(H)+2 (H-5) I^2-15 I J=0
\end{gather*}
Then the last third-order invariant $K$ is given by 
\[-\left(I (4 I-3 J) \hat \partial_I (H)+((3 J-H) I-J^2) \hat \partial_J(H)-2 I H\right).\]


\end{document}